\title[A special Lagrangian type equation for holomorphic line bundles]{A special Lagrangian type equation for holomorphic line bundles}
\author[A. Jacob]{Adam Jacob*}
\thanks{$^{*}$Supported in part by NSF Grant No. DMS-1204155. .}
 \address{Department of Mathematics, Harvard University, 1 Oxford St., Cambridge, MA 02138}
 \email{ajacob@math.harvard.edu}
\author[S.-T. Yau]{Shing-Tung Yau}
 \email{yau@math.harvard.edu}
\theoremstyle{plain}
\newtheorem{thm}{Theorem}[section]
\newtheorem{prop}[thm]{Proposition}
\newtheorem{defn}[thm]{Definition}
\newtheorem{lem}[thm]{Lemma}
\theoremstyle{definition}
\numberwithin{equation}{section}
\flushbottom \thispagestyle{empty} \pagestyle{plain}
\renewcommand{\thanks}[1]{\footnote{#1}} 
\newcommand{\be}{\begin{equation}}
\newcommand{\bea}{\begin{eqnarray}}
\newcommand{\eea}{\end{eqnarray}} \newcommand{\ee}{\end{equation}}
 \def\ba{\begin{eqnarray}}
\def\ea{\end{eqnarray}}
\def\C{{\bf C}}
\def\ra{\rightarrow}
\def\o{\omega}
\def\o{\omega}
\def\al{\alpha}
\def\b{\beta}
\def\d{\delta}
\def\o{\omega}
\def\t{\theta}
\def\ti{\tilde}
\def\Z{{\bf Z}}
\def\R{{\bf R}}
\def\C{{\bf C}}
\def\ra{\rightarrow}
\def\[{{\bf [}}
\def\]{{\bf ]}}
\def\pl{\partial}
\begin{document}
\maketitle

\begin{abstract}
Let $L$ be a holomorphic line bundle over a compact K\"ahler manifold $X$. Motivated by mirror symmetry, we study the deformed Hermitian-Yang-Mills equation on $L$, which is  the line bundle analogue of the special Lagrangian equation in the case that $X$ is Calabi-Yau. We show that this equation is the Euler-Lagrange equation for a positive functional, and that solutions are unique global minimizers. We provide a necessary and sufficient criterion for existence in the case that $X$ is a K\"ahler surface. For the higher dimensional cases, we introduce a line bundle version of the Lagrangian mean curvature flow, and prove convergence when $L$ is ample and $X$ has non-negative orthogonal bisectional curvature. 

\end{abstract}

\begin{normalsize}

\section{Introduction}

At the most fundamental level, mirror symmetry describes a framework for relating complex geometry to symplectic geometry on two Calabi-Yau manifolds. It defines a duality between the underlying structures on each manifold, and the development of this elegant theory has lead to progress in both physics and mathematics. One particular aspect of mirror symmetry we are interested in is the relationship between the derived category of coherent sheaves on one manifold, and Fukaya's category of Lagrangian submanifolds with local systems on the other \cite{K,KS}. 

This duality can be approached from a differential geometric perspective \cite{GW, SYZ}. In the simple case of a torus, drawing from the physics literature  \cite{MMMS}, the second author, C. Leung, and E. Zaslow gave an explicit formulation of an equation on a line bundle that corresponds to the special Lagrangian equation on the mirror \cite{LYZ}. Specifically, let $M$ and $W$ be dual torus fibrations over a base tori $B$. In this semi-flat setting, a connection $A$ on a holomorphic line bundle $L$ over $M$ is dual to a Lagrangian section $\mathcal L$ of the torus fibration $W\rightarrow B$. If $\mathcal L$ is a special Lagrangian section, using the  Fourier-Mukai Transform, Leung-Yau-Zaslow show that $A$ must satisfy the deformed Hermitian-Yang-Mills equation:
 \be
 \label{first formulation}
 {\rm Im}{(\o-F)^n}={\rm tan}\hat\t \,{\rm Re} {(\o-F)^n},
 \ee
where $\o$ is the K\"ahler form on $M$, $F$ is the curvature of the connection $A$, and $\hat\t$ is the phase of the special Lagrangian.

In this paper we undertake a rigorous examination of equation \eqref{first formulation}. Since we are primarily interested in the PDE, we forget about the apparatus of mirror symmetry and only focus on the line bundle side of the above duality. In fact, equation \eqref{first formulation} can be defined on any holomorphic line bundle over a compact K\"ahler manifold, and we choose to work in this general setting. Thus, when the base manifold is not Calabi-Yau, there is no meaningful notion of a mirror special Lagrangian. As a result, we are motivated by mirror symmetry, but never apply it directly. We develop tools for addressing equation \eqref{first formulation} which are in many ways analogous to those used to study special Lagrangian submanifolds, while keeping track of the differences between the two settings. We hope that these results are not only interesting in their own right, but could possibly shed new light on the Lagrangian case.

Here we describe our main results. Let $(X,\o)$ be compact K\"ahler manifold of complex dimension $n$, and let $L$ be a holomorphic line bundle over $X$. Given a metric $h$ on $L$, we define the complex function $\zeta:X\rightarrow\C$ by
\be
\zeta:=\frac{(\o-F)^n}{\o^n}.\nonumber
\ee
The average of this function is the fixed complex number $Z_L:=\int_X\zeta\,\frac {\o^n}{n!}$, which is independent of the choice of metric on $L$. Let $\t$ denote the argument of $\zeta$, and $\hat\t$ the argument of $Z_L$. We show that a metric $h$ solving $\t\equiv\hat\t$ is equivalent to a solution of equation \eqref{first formulation}, and we use this formulation to conclude that solutions of \eqref{first formulation} are always elliptic. Next, we define a functional on the space of metrics:
\be
V(h)=\int_X|\zeta|\,\frac{\o^n}{n!}.\nonumber
\ee
This functional has the property that critical points correspond to solutions of  \eqref{first formulation}, and that all critical points are absolute minima. We use these properties to prove the following uniqueness result:
\begin{thm}
\label{uniqueness}
Let $L$ be a holomorphic line bundle over a compact K\"ahler manifold $X$. Suppose there exists a metric $h$ on $L$ that solves equation \eqref{first formulation}. Then any other solution is a real constant multiple of $h$.  
\end{thm}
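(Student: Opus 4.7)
Since $h_0$ and $h_1$ are metrics on the same line bundle, write $h_1 = e^{-\psi} h_0$ for some $\psi \in C^\infty(X, \R)$; the claim is that $\psi$ is constant. The strategy is variational: by the triangle inequality for the integral of $\zeta$,
\[
V(h) \;=\; \int_X |\zeta|\,\frac{\o^n}{n!} \;\ge\; \Bigl|\int_X \zeta\,\frac{\o^n}{n!}\Bigr| \;=\; |Z_L|,
\]
with equality iff $\zeta$ has constant argument $\hat\t$ on $X$, that is, iff $h$ solves \eqref{first formulation}. Hence $V(h_0) = V(h_1) = |Z_L|$, the global minimum of $V$.

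Consider the affine family $h_t := e^{-t\psi} h_0$, $t \in [0,1]$. Since the Chern curvature $F_t$ depends affinely on $t$, the function $\zeta_t$ is a polynomial of degree $n$ in $t$; and since $\int_X \zeta_t\,\o^n/n! = Z_L$ is a cohomological invariant, both $\int_X \dot\zeta_t\,\o^n$ and $\int_X \ddot\zeta_t\,\o^n$ vanish identically. Setting $V(t) := V(h_t)$ and using $\partial_t |\zeta| = \Re(\dot\zeta\bar\zeta)/|\zeta|$ together with these vanishing integrals, one obtains $V'(0) = V'(1) = 0$ (consistent with criticality at both endpoints) and
\[
V''(0) \;=\; \int_X \frac{\bigl(\Im(e^{-i\hat\t}\dot\zeta_0)\bigr)^2}{|\zeta_0|}\,\frac{\o^n}{n!} \;\ge\; 0,
\]
with equality iff $\psi$ lies in the kernel of the linearization $\cL$ of \eqref{first formulation} at $h_0$. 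The key step is to promote this to $V(t) \equiv |Z_L|$ on $[0,1]$: combined with $V(t) \ge |Z_L|$ and $V(0) = V(1) = |Z_L|$, this follows from convexity of $V(t)$ along the path, which in turn forces $V''(0) = 0$ and hence $\cL\psi = 0$.

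I expect the convexity of $V(t)$ along the linear path to be the principal technical obstacle: the general expression for $V''(t)$ contains an indefinite term $\Re(\bar\zeta_t\,\ddot\zeta_t)/|\zeta_t|$ which no longer integrates to zero once $\arg\zeta_t$ ceases to be constant in $x$, as happens a priori for $t \in (0,1)$; controlling this term should require exploiting both the polynomial structure of $\zeta_t$ in $t$ and the vanishing of $\int_X \ddot\zeta_t\,\o^n$. Once $\cL \psi = 0$ is established, the linearization has the schematic form $\cL\psi = \Im\bigl(e^{-i\hat\t}(\o - F_0)^{n-1} \wedge \partial\bar\partial\psi\bigr)/\o^n$, a second-order operator with no zeroth-order term that is elliptic at $h_0$ by the ellipticity of \eqref{first formulation} noted earlier. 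The strong maximum principle on compact $X$ then forces $\psi$ to be a real constant $c$, giving $h_1 = e^{-c} h_0$ as required.
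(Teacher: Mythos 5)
Your overall strategy mirrors the paper's: $V(h)\ge |Z_L|$ with equality exactly at solutions, so both endpoints of the path $h_t=e^{-t\psi}h_0$ attain the absolute minimum, and you want to deduce that $V(t)$ is constant on $[0,1]$ and then extract constancy of $\psi$ from the maximum principle applied to a $\Delta_\eta$-type operator. Your computation of $V''(0)$ and the identification of $\ker\cL$ with constants are both correct: at a solution $\bar\zeta_0/|\zeta_0|=e^{-i\hat\t}$, so the term $\int_X\Re(\bar\zeta_0\ddot\zeta_0)/|\zeta_0|$ reduces to $\Re(e^{-i\hat\t}\int_X\ddot\zeta_0\,\o^n/n!)=0$, and $\Im(e^{-i\hat\t}\dot\zeta_0)/|\zeta_0|$ is exactly $\Delta_\eta\psi$, an elliptic second-order operator with no zeroth-order term.

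The genuine gap is the step you flag yourself: concluding $V(t)\equiv |Z_L|$ from convexity of $V$ along the affine path. Convexity is not established, and there is no reason to expect it. Pointwise, $|\zeta_t|=|\det(A+tB)|$ for a fixed complex $n\times n$ matrix $A=I+iK_0$ and $B$ Hermitian; the modulus of a degree-$n$ polynomial in $t$ with non-real roots is \emph{not} convex in general (take two conjugate-pair roots close to the real axis and separated horizontally — the modulus has an interior local maximum), and the sign of the troublesome term $\int_X\Re(\bar\zeta_t\ddot\zeta_t)/|\zeta_t|$ cannot be controlled once $\arg\zeta_t$ is non-constant in $x$, precisely as you note. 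So the argument as written does not go through. The paper avoids this entirely by a softer argument: if $V(h_t)$ were non-constant it would attain its maximum at some interior $T\in(0,1)$ where $V'(T)=0$; Proposition \ref{criticalpoint} then forces $h_T$ to have vanishing mean curvature, so by Proposition \ref{globalmin} $V(h_T)=|Z_L|$, which is a contradiction since $V\ge |Z_L|$ everywhere. Having $V(h_t)\equiv|Z_L|$, the paper then uses the pointwise calibration inequality $\kappa(h_t)\le 1$ with $\int_X\kappa|\zeta|\,\o^n/n! = |Z_L| = \int_X|\zeta|\,\o^n/n!$ to conclude $\kappa(h_t)\equiv 1$, hence $\t(h_t)\equiv\hat\t$ for \emph{every} $t$ — a stronger conclusion than $V''(0)=0$ — and finishes by differentiating in $t$ to get $\Delta_{\eta_t}\phi=0$. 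You should replace your convexity step with this maximum-along-the-path argument together with the pointwise equality case of the calibration inequality; the rest of your proof is fine.
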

To prove this theorem we use the fact that solutions to \eqref{first formulation} behave in a similar fashion to calibrated submanifolds, and that the functional $V(\cdot)$ acts like a volume functional (see \cite{HL}). This is one instance where the analogy with the special Lagrangian equation is quite strong.

We now turn to our existence results. In the case that $X$ is a K\"ahler surface, we show that a solution to \eqref{first formulation} exists if and only if $L$ satisfies a geometric stability condition, which we describe as follows. First, we solve the equation in the trivial case $\hat\t=0$. Then, by possibly looking at $L^{-1}$ instead of $L$, we can assume without loss of generality that $\hat\t>0$. Consider the following K\"ahler form on $X$:
\be
\Omega:={\rm cot}(\hat\t)\omega+iF.\nonumber
\ee 
Note that $\Omega$ depends on our choice of metric $h$, while the K\"ahler class $[\Omega]$ does not. We say the line bundle $L$ is {\it stable} if there exists a metric $h$ on $L$ so that $\Omega>0$. With this definition, we prove the following result:
\begin{thm}
\label{surface}
Let $L$ be a holomorphic line bundle over a K\"ahler surface $X$. Then $L$ admits a solution to \eqref{first formulation} if and only if $L$ is stable.
\end{thm}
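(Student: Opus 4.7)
The plan is to reformulate the deformed Hermitian--Yang--Mills equation on a K\"ahler surface as a complex Monge--Amp\`ere equation for the form $\Omega = \cot\hat\theta\,\omega + iF$, and then invoke Yau's theorem. Per the reduction made just before the statement, assume $\hat\theta \in (0,\pi)$. Diagonalize $iF$ with respect to $\omega$ at a point to obtain real eigenvalues $\lambda_1,\lambda_2$; a direct computation yields $\zeta = (1-\lambda_1\lambda_2) + i(\lambda_1+\lambda_2)$, so by the tangent addition formula $\theta = \arctan\lambda_1 + \arctan\lambda_2$. The equation $\theta \equiv \hat\theta$ becomes $\lambda_1 + \lambda_2 = \tan\hat\theta\,(1 - \lambda_1\lambda_2)$, and adding $\cot^2\hat\theta$ to both sides factors it as $(\cot\hat\theta + \lambda_1)(\cot\hat\theta + \lambda_2) = \csc^2\hat\theta$. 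Globally this reads $\Omega^2 = (\sin\hat\theta)^{-2}\,\omega^2$.

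Necessity is then immediate: given a solution $h$, the pair satisfies $\theta_1 + \theta_2 = \hat\theta$ with $\theta_j \in (-\pi/2,\pi/2)$, so $\theta_j > \hat\theta - \pi/2$ and thus $\lambda_j > \tan(\hat\theta - \pi/2) = -\cot\hat\theta$. Hence $\Omega > 0$ pointwise and $h$ itself witnesses stability.

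For sufficiency, assume stability and fix a reference metric $h_0$ with $\Omega_0 := \cot\hat\theta\,\omega + iF_0 > 0$. Any other metric is $h = h_0 e^{-\phi}$ for real $\phi \in C^\infty(X)$, yielding $\Omega = \Omega_0 + i\partial\bar\partial\phi$. Setting $g := \omega^2/(\sin^2\hat\theta\cdot\Omega_0^2)$, a smooth positive function on $X$, the task reduces to
\[
(\Omega_0 + i\partial\bar\partial\phi)^2 = g\cdot\Omega_0^2, \qquad \Omega_0 + i\partial\bar\partial\phi > 0,
\]
which is the Calabi--Yau problem on $(X,\Omega_0)$. Yau's theorem provides a smooth solution provided $\int_X g\,\Omega_0^2 = \int_X \Omega_0^2$, equivalently $[\Omega]^2 = [\omega]^2/\sin^2\hat\theta$. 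This identity is automatic from $\hat\theta = \arg Z_L$: writing $\sigma = iF$ for the real representative and expanding $Z_L = \tfrac{1}{2}\int(\omega - F)^2$ gives $\Re(Z_L) = \tfrac{1}{2}([\omega]^2 - [\sigma]^2)$ and $\Im(Z_L) = [\omega]\cdot[\sigma]$, whence $\tan\hat\theta\cdot([\omega]^2 - [\sigma]^2) = 2[\omega]\cdot[\sigma]$, which rearranges precisely to $[\Omega]^2 = [\omega]^2/\sin^2\hat\theta$. The main content of the theorem is thus the factorization of the phase equation into a Monge--Amp\`ere equation; once that is in hand, both directions collapse to elementary algebra together with a single invocation of Yau's theorem. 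No analogous factorization exists for $n\geq 3$, which is why the paper must turn to flow methods beyond $n=2$.
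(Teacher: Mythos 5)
Your proposal is correct and follows essentially the same route as the paper: prove necessity from $\arctan(\lambda_j) > \hat\theta - \tfrac{\pi}{2}$, rewrite the phase equation as the complex Monge--Amp\`ere equation $\Omega^2 = (1+\cot^2\hat\theta)\,\omega^2 = \csc^2\hat\theta\,\omega^2$, check the cohomological volume-matching condition, and invoke Yau's theorem. (Two tiny points of presentation: the factorization $(\cot\hat\theta+\lambda_1)(\cot\hat\theta+\lambda_2)=\csc^2\hat\theta$ requires first multiplying through by $\cot\hat\theta$ before adding $\cot^2\hat\theta$; and the identity written with $\tan\hat\theta$ is singular at $\hat\theta=\pi/2$, so it is cleaner to phrase the volume match with $\cot\hat\theta=(1-a_2)/a_1$ as the paper does---the conclusion $[\Omega]^2=[\omega]^2/\sin^2\hat\theta$ holds regardless.)
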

We prove the above theorem by transforming equation  \eqref{first formulation} into a complex Monge-Amp\`ere equation, and applying the second author's solution of the Calabi conjecture \cite{Y}. While this result completely settles existence of a solution to \eqref{first formulation} in the surface case, this method does not seem to generalize easily to higher dimensions. When $\t$ satisfies the supercritical phase condition (see section \ref{intro} for a definition), we can extend our stability condition to higher dimensions, and prove it is necessary, although we do not know if this condition is sufficient to guarantee existence.

In order to address existence in higher dimensions, we define a parabolic evolution equation for the metric $h$ on $L$ which is the gradient flow of the functional $V(\cdot)$. Given an initial metric $h_0$ on $L$, we define a flow of metrics $h_t=e^{-\phi(t)}h_0$ by the following equation:
\be
\label{flow1}
\frac{d}{dt}\phi(t)=\t-\hat\t.
\ee
Due to the similarities between the above evolution equation and the Lagrangian mean curvature flow with a potential, we refer to \eqref{flow1} as the {\it line bundle mean curvature flow}. Lagrangian mean curvature flow has been extensively studied, for example see \cite{CLT, H,N, S1, S2, SW,TY, TW,W} and references therein, and certain aspects of Lagrangian mean curvature flow carry over to the evolution equation \eqref{flow1}. However, there are many important differences and difficulties to take into consideration. For example one can define a metric $\eta_{\bar kj}$ on $T^{1,0}(X)$ that is analogous to the induced metric on the Lagrangian submanifold. Unfortunately, this metric is not K\"ahler, thus interchanging derivatives produces torsion terms which complicate many of the maximum principle arguments used for mean curvature flow. At an even more basic level, because our setting does not include an actual Lagrangian submanifold, many of the standard arguments involving decomposing the tangent space into normal and tangential directions can not be applied.

Despite these difficulties we develop some tools for working with the evolution equation \eqref{flow1}, and prove convergence in certain cases. Our first convergence result is as follows

\begin{thm}
\label{bisectional}
Let $L$ be an ample line bundle over a compact K\"ahler manifold $X$ with non-negative orthogonal bisectional curvature. There exists a natural number $k$ so that $L^{\otimes k}$ admits a solution to \eqref{first formulation}. Furthermore, this solution is constructed via a smoothly converging family of metrics along the line bundle mean curvature flow.

\end{thm}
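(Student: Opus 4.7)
The plan is to exploit the ampleness of $L$ by passing to a sufficiently high tensor power $L^{\otimes k}$, choose an initial metric whose curvature is so positive that the phase $\theta$ of $\zeta$ starts concentrated in a narrow window, and then run the flow \eqref{flow1}, using the non-negative orthogonal bisectional curvature of $X$ in maximum principle arguments to produce a priori estimates and smooth convergence. Since $L$ is ample there is a Hermitian metric $h_\ast$ on $L$ with strictly positive curvature $F_\ast$. I would take as initial data $h_0 = h_\ast^{\otimes k}$ on $L^{\otimes k}$, whose curvature is $kF_\ast$. A direct computation, diagonalising $F_\ast$ with respect to $\omega$, shows that for $k$ large the image of $\theta$ on $X$ (and therefore also the constant $\hat\theta$) lies in an interval of length less than $\pi/2$ around a value determined only by $n$. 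This is the hypercritical phase regime in which the equation should be best behaved.

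Next I would compute the evolution equation of $\theta$ along \eqref{flow1}, which takes the form of a linear parabolic equation whose elliptic part is built from the positive definite Hermitian tensor $\eta_{\bar k j}$ mentioned in the introduction. The maximum principle then gives that $\max_X\theta$ is non-increasing and $\min_X\theta$ is non-decreasing, so the initial hypercritical range is preserved for all time. The $C^0$ estimate on $\phi$ is easy, because $\partial_t\phi = \theta - \hat\theta$ is uniformly bounded in $L^\infty$; a standard Green's function normalisation, as in the K\"ahler-Ricci flow, then controls $\phi - \bar\phi(t)$ uniformly.

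The crucial step, and the main obstacle, is the a priori $C^2$ estimate on $\phi(t)$. I would adapt Yau's second-order method, applying the maximum principle to a quantity of the form $\log\,{\rm tr}_\eta(\omega) + A\phi$, differentiating the equation twice, and identifying the bad terms. These fall into two classes: curvature terms of $\omega$ of type $R_{i\bar i j\bar j}$, and torsion terms that arise precisely because $\eta$ is not K\"ahler. The non-negativity of the orthogonal bisectional curvature of $\omega$ produces the correct sign on the first class, and the hypercritical concentration of $\theta$ bounds the positive part of the linearised operator from below, which should suffice to absorb the torsion terms. Once $C^0$ and $C^2$ are in hand the flow is uniformly parabolic and, on the preserved phase range, concave in the Hessian of $\phi$, so parabolic Evans--Krylov followed by Schauder bootstrapping yields uniform $C^\infty$ bounds. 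Finally $V(h_t)$ is non-increasing and bounded below, so $\int_0^\infty \|\theta - \hat\theta\|_{L^2}^2 \,dt < \infty$, which together with the uniform smooth bounds upgrades to smooth subsequential convergence of $h_t$ to a solution $h_\infty$ of \eqref{first formulation}; Theorem \ref{uniqueness} then promotes this to smooth convergence of the full flow up to a choice of overall scale.
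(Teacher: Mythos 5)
Your overall architecture matches the paper: pass to a large tensor power so that the initial phase is hypercritical, note that $\dot\theta = \Delta_\eta\theta$ preserves the phase bound, and use the resulting positivity of $iF$ together with concavity of the operator to run Evans--Krylov and Schauder, then extract a subsequential limit using the monotonicity of $V$. But the central $C^2$ step in your argument is handled quite differently from the paper, and as you have written it there is a genuine gap. You propose applying the maximum principle to the Yau-type quantity $\log\,{\rm tr}_\eta(\omega) + A\phi$ and absorbing ``torsion terms'' using the hypercritical phase; this is speculative, and precisely because $\eta_{\bar kj}$ is not K\"ahler, it is not clear that the commutator terms arising from $\nabla\eta$ can be absorbed by a constant $A$ the way they are in the Aubin--Yau estimate for Monge--Amp\`ere. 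The paper sidesteps this entirely: it applies the heat operator not to a trace quantity but to $\log|\zeta| = \tfrac12\log\det(I+K^2)$, and shows by a direct computation (using the second Bianchi identity to cancel the would-be torsion contributions, and using $\lambda_j>0$ from the preserved hypercritical phase together with non-negative orthogonal bisectional curvature to control the remaining curvature term) that $(\partial_t - \Delta_\eta)\log|\zeta| \le 0$. This bounds $|\zeta|$, hence $|F|_g$, from above, and the preserved hypercritical phase already gives $iF \ge \epsilon_0\omega$; together these yield the uniform $C^2$ bound $\epsilon_0\omega \le iF \le C\omega$ without any Yau-type test function. You should either justify that the torsion terms in your trace estimate are truly controllable (the paper's cancellation relies on the specific determinant structure of $|\zeta|$, which $\log\,{\rm tr}_\eta(\omega)$ does not share), or adopt the paper's choice of test function.

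Two smaller points. First, the monotonicity of $V$ gives $\int_0^\infty \int_X |H|^2_\eta\, v\,\omega^n/n! \,dt < \infty$, i.e.\ an $L^2$-in-time bound on the mean curvature $H = d\theta$, not on $\theta - \hat\theta$ as you wrote; it is $H(t_j) \to 0$ in $L^2$ that one extracts, and the vanishing of $H$ for the subsequential limit is what forces $\theta \equiv \hat\theta$. Second, the claim that Theorem~\ref{uniqueness} upgrades subsequential convergence to convergence of the full flow requires an additional argument (e.g.\ a \L ojasiewicz-type inequality or a monotonicity argument for the distance to the critical set); the paper only asserts subsequential convergence, and you should not state the stronger conclusion without proof.
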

The assumption that $X$ has non-negative orthogonal bisectional curvature comes up in an application of the maximum principle, and we are hopeful that in the future this assumption can be removed. Here we note that the condition of non-negative orthogonal bisectional curvature is slightly more general than that of nonnegative bisectional curvature (see \cite{GZ}). The ampleness assumption is more central to our argument. It corresponds to the condition that the phase $\t$ remains large along the flow, ensuring the curvature $iF$ remains a positive $(1,1)$ form. The positivity of $iF$ implies the operator $h\mapsto\t(h)$ is concave, which plays an important role in the regularity theory of our equation.  In fact, at this point we carry out a complex analogue of the argument of Smoczyk-Wang \cite{SW2}, who prove convergence of the Lagrangian mean curvature flow with convex potential in a flat torus fibration.  The difference is that  in \cite{SW2}, the authors consider a potential with positive Hessian, while our potential is plurisubharmonic.

Because of the important role ampleness plays in the regularity, we do not expect this assumption can be easily removed. It would be interesting to relate ampleness to a notion of stability, as in the complex surface case, and in addition prove that convergence of the flow is dependent upon a stability condition. We note that the second author, along with R. Thomas,  developed a notion of stability for special Lagrangian submanifolds, and conjectured that stability implies convergence of the Lagrangian mean curvature flow \cite{TY}. However, at the moment we can not see how this can be incorporated into our setup. 

We conclude the paper with a final convergence result, proving that the flow converges as long as $\nabla F$, the line bundle analogue of the second fundamental form of a Lagrangian submanifold, stays bounded in $C^0$. 
 
\begin{thm}
\label{higherbounds}
Let $L$ be a holomorphic line bundle over a compact K\"ahler manifold $X$, and let $h_t$ be a path of metrics on $L$ solving \eqref{flow} on the time interval $[0,T)$, with $T\leq\infty$. Assume that the following quantity is uniformly bounded in time
\be
\label{assump1}
|\nabla F|^2_g\leq C_1.
\ee
Then there exists constants $C_k$, depending only on $C_1$ and initial data, bounding all higher order derivatives of $F$ along the flow:
\be
\label{goal1}
|\nabla^k F|^2_g\leq C_k.
\ee 
If $T$ is finite, then $h_t$ converges in $C^\infty$ to a limiting metric $h_T$, and the flow can be continued. If $T=\infty$, then there exists a subsequence of times along the flow which converge to a smooth solution $h_\infty$ of \eqref{first formulation}.

\end{thm}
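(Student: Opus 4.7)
The plan is a standard parabolic bootstrap: promote the $C^1$ bound \eqref{assump1} on $F$ to uniform $C^\infty$ control on $F$, and then extract smooth convergence at $T<\infty$ and subsequential convergence at $T=\infty$.

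First I would upgrade \eqref{assump1} to a $C^0$ bound on $F$ itself. Since $[F]$ represents a fixed multiple of $c_1(L)$, the integral $\int_X F\wedge\o^{n-1}$ is a time-independent topological constant. Combined with the Lipschitz bound $|\nabla F|_g\le C_1^{1/2}$ and the compactness of $X$, a standard oscillation argument gives $|F|_g\le C_0$ uniformly in $t$.

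The heart of the proof is the higher derivative estimate by induction on $\ell$. Applying $i\partial\bar\partial$ to the flow equation $\dot\phi=\t-\hat\t$, and observing that the linearization of $\t$ in $F$ has symbol $\Lambda^{j\bar k}=\partial\t/\partial F_{j\bar k}$ (which is uniformly elliptic and smooth in its argument by the previous $C^0$ bound on $F$), one obtains a schematic evolution equation
\[
\Big(\frac{d}{dt}-\Lambda^{j\bar k}\nabla_j\nabla_{\bar k}\Big)\nabla^\ell F = P_\ell(F,\nabla F,\ldots,\nabla^\ell F),
\]
where $P_\ell$ is polynomial in its arguments and in the fixed background curvature of $g$. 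Here it is important that $g$ is K\"ahler, so that commutators of $\nabla$ introduce only the curvature of $g$ and not the torsion of the auxiliary metric $\eta$ mentioned in the introduction. A Bernstein-type maximum principle applied to $|\nabla^\ell F|^2_g+A_\ell|\nabla^{\ell-1}F|^2_g$, with $A_\ell$ chosen large enough that the good negative squares from the elliptic part dominate the cross-terms in $P_\ell$, yields \eqref{goal1} inductively for every $\ell\ge 2$, starting from the base case $\ell=1$ furnished by \eqref{assump1}.

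Once these uniform smoothness bounds are in hand, the convergence statements follow routinely. If $T<\infty$, then $\dot\phi=\t-\hat\t$ is uniformly bounded in $C^\infty$, so $\phi(\cdot,t)$ is Cauchy in $C^\infty$ as $t\nearrow T$ with smooth limit $\phi_T$, and standard short-time existence for the quasilinear parabolic flow \eqref{flow1} with smooth initial data extends the flow past $T$. If $T=\infty$, the gradient flow identity
\[
V(h_0)-\lim_{t\to\infty}V(h_t) = \int_0^\infty\|\t-\hat\t\|_{L^2}^2\,dt
\]
is finite since $V\ge 0$, so some sequence $t_k\to\infty$ satisfies $\|\t(t_k)-\hat\t\|_{L^2}\to 0$. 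Arzel\`a--Ascoli combined with the uniform $C^\infty$ bounds then yields a further subsequence along which $h_{t_k}\to h_\infty$ in $C^\infty$, with $h_\infty$ satisfying $\t\equiv\hat\t$, i.e., solving \eqref{first formulation}.

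The main obstacle is the inductive Bernstein-type step. Although the K\"ahler hypothesis removes torsion issues, one must still carefully identify the structure of the nonlinearity $P_\ell$ and verify that, at every stage, the cross-terms can be absorbed by the elliptic part of the evolution for the Bernstein combination; the base case $\ell=2$ is the most delicate, since there $|\nabla^2 F|$ must be controlled from \eqref{assump1} and the ellipticity and smoothness of $\Lambda^{j\bar k}$ alone.
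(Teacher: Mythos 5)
Your overall strategy (upgrade the $C^1$ bound on $F$ to a uniform $C^0$ bound, then bootstrap to $C^\infty$ via a parabolic maximum principle, then use monotonicity of $V$ plus Arzel\`a--Ascoli) is the one the paper follows, and your treatment of the two convergence cases is correct. However, there are two genuine gaps in the intermediate steps.

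First, the proposed $C^0$ bound on $F$ does not follow from the topological constancy of $\int_X F\wedge\o^{n-1}$ together with \eqref{assump1}. That integral only fixes the average of ${\rm tr}_g F$, and a bound on ${\rm tr}_g F$ together with a Lipschitz bound on $F$ does not control $|F|_g$: a parallel trace-free two-form such as $F=\lambda(dz^1\wedge d\bar z^1-dz^2\wedge d\bar z^2)$ on a flat torus has $\nabla F=0$ and ${\rm tr}_g F=0$ but $|F|_g$ arbitrarily large. The paper instead uses the monotonicity $V(h_t)\le V(h_0)$ together with the pointwise inequality $v=|\zeta|\ge|F|_g$ (equation \eqref{controlofFbyv}). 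This gives a uniform $L^1$ bound on $|F|_g$, hence a bound on $\inf_X|F|_g$, which \eqref{assump1} then propagates across the (compact, finite-diameter) manifold to a pointwise bound. You need some input that bounds $|F|_g$ itself, not just its trace.

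Second, the additive Bernstein quantity $|\nabla^\ell F|^2_g+A_\ell|\nabla^{\ell-1}F|^2_g$ is too weak at the crucial step $\ell=2$. Since $\dot\eta^{-1}$ is of size $|\nabla^2F|$, and since the commutator $[\nabla\nabla,\Delta_\eta]$ produces $\nabla\nabla\eta^{-1}*\nabla\nabla F$ which also scales like $|\nabla^2F|^2$, the evolution inequality for $|\nabla^2F|^2$ necessarily contains a cubic term $C|\nabla^2F|^3$. The negative term $-A_2|\nabla^2F|^2$ coming from the evolution of $A_2|\nabla F|^2$ is only quadratic, so it cannot absorb the cubic term without already knowing $|\nabla^2F|\lesssim A_2$, which is circular. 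The paper resolves this by using the multiplicative Cheng--Yau/Shi quantity $f=(C_0A+|DF|^2)|D^2F|^2$: expanding the heat operator on the product produces a term $(-c|D^2F|^2+C)\cdot|D^2F|^2$, i.e. a quartic good term $-c|D^2F|^4$, and after Young's inequality this absorbs both the cubic bad term and the gradient cross-terms (for $C_0$ large). Once you replace the additive test function by this multiplicative one, the induction on $\ell$ proceeds essentially as you outline, and the rest of your argument is sound.
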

As a consequence of this theorem we see that either $|\nabla F|_g$ blows up at a finite time along the flow, creating a singularity, or it is bounded, and the flow exists for all time. Again, because the term $\nabla F$ is the line bundle equivalent of the second fundamental form, the above result is a line bundle analogue of Theorem 1.2 from \cite{S1}.

The paper is organized as follows. In Section \ref{intro} we give our main definitions, and explicitly write down several forms of the deformed Hermitian-Yang-Mills equation \eqref{first formulation} that will be used in later sections. In Section \ref{volume} we introduce the volume functional, prove that critical points must satisfy \eqref{first formulation}, and then prove that solutions are unique. Section \ref{stability} contains our complete existence results in the case of K\"ahler surfaces. Finally, in Section \ref{convergence}, we intoduce the line bundle mean curvature flow and prove our two convergence results.

\vspace{\baselineskip}
\noindent
{\bf Acknowledgements} We would like to express our gratitude to Murad Alim, Tristan C. Collins, Siu-Cheong Lau, Duong H. Phong, Valentino Tosatti and Mu-Tao Wang for many helpful discussions and comments.

\section{The deformed Hermitian-Yang-Mills equation}
\label{intro}
We begin this section with a few preliminary definitions, and introduce our notation and conventions. Let $(X,\omega)$ be a compact K\"ahler manifold. The K\"ahler form $\omega$ is a closed $(1,1)$ form, locally expressed as
\be
\o=\frac i2g_{\bar kj}dz^j\wedge d\bar z^k,\nonumber
\ee
where $g_{\bar kj}$ is a Hermitian metric on the holomorphic tangent bundle $T^{1,0}(X)$. The top dimensional form $\frac{\o^n}{n!}$ defines a natural volume form on $X$, and we normalize $\o$ so that $X$ has volume one. Let $\Lambda^{p,q}(X)$ denote the bundle of $(p,q)$ forms, and note that the metric $g_{\bar kj}$ induces a metric on each of these bundles as well.  We define the Chern connection $\nabla$  on $T^{1,0}(X)$ to be the unique connection that preserves the metric $g_{\bar kj}$ and defines the holomorphic structure. Given $\nabla$, we extend this connection to the associated bundles $\Lambda^{p,q}(X)$ in the standard fashion.

Fix a holomorphic line bundle $L$ over $X$. Given a metric $h$ on $L$, the curvature two form can be expressed in local coordinates as
 \be
 F=\frac12 F_{\bar kj} dz^j\wedge d\bar z^k:=-\frac12\pl_j\pl_{\bar k}{\rm log}(h) dz^j\wedge d\bar z^k.\nonumber
 \ee
Using $F$, we introduce another Hermitian metric $\eta_{\bar kj}$ on $T^{1,0}(X)$ that plays an important role in this paper, defined by 
 \be
\eta_{\bar kj}=g_{\bar kj}+F_{\bar k\ell}g^{\ell \bar m}F_{\bar mj}.\nonumber
\ee
This metric is the line bundle analogue of the induced metric on a Lagrangian submanifold. Note that $\eta_{\bar kj}$ is not a K\"ahler metric. Nevertheless, we use $\eta_{\bar kj}$ to define the following Laplacian on $C^\infty(X)$:
\be
\Delta_\eta(f)=\eta^{j\bar k}\pl_j\pl_{\bar k}(f).\nonumber
\ee
We remark this operator is defined using the ``analyst convention," and is elliptic as long as the curvature $F$ is bounded.

Associated to the line bundle $L$ we have the following two invariants.
\begin{defn}
Given a holomorphic line bundle $L$ over $X$, we define following fixed complex number
\be
Z_L:=\int_X \frac{(\o-F)^n}{n!},\nonumber
\ee
as well as the following angle:
\be
\hat\t:={\rm arg}(Z_L).\nonumber
\ee
\end{defn}
These two invariants are independent of the choice of metric $h$, since for $1\leq k\leq n$, the form $F^k$ differs from the $k$-th degree term of the Chern character of $L$ by a constant. We view arg$(\cdot)$ as a function from $\C$ to $\R$, as opposed to the unit circle, by specifying the argument to be zero precisely when the curvature $F$ vanishes.

The deformed Hermitian-Yang-Mills equation seeks a metric $h$ on $L$ so that the function
 \be
\zeta:=\frac{(\o-F)^n}{\o^n}:X\longrightarrow \C\nonumber
 \ee
 has constant argument. It is not hard to see that if a solution exists, the constant argument must in fact be equal to $\hat\t$. Thus we are looking for a solution to the equation
 \be
 \label{DHYMn}
 {\rm Im} \frac{(\o-F)^n}{\o^n}={\rm tan}(\hat\t)\,{\rm Re} \frac{(\o-F)^n}{\o^n}.
 \ee
As stated in the introduction, in the case where $X$ is a Calabi-Yau manifold, the above equation is the line bundle analogue of the equation for special Lagrangian section of a torus fibration. Again, in this paper we consider the general case where $X$ is a compact K\"ahler manifold, so we never appeal to mirror symmetry directly, and only use it for motivation.

Now, in its current form, it is not obvious that equation \eqref{DHYMn} is elliptic. We therefore derive two equivalent formulations of \eqref{DHYMn} in which ellipticity is more apparent. First, we define an endomorphism $K$ of $T^{1,0}(X)$ via the contraction
 \be
 K:=g^{j\bar k}F_{\bar k\ell} \, \frac{\pl}{\pl z^j}\otimes dz^\ell.\nonumber
 \ee 
 Now, the top dimensional form  $(\o-F)^n$ is written locally as
 \bea
\frac{(\o-F)^n}{n!}&=& {\rm det}\left(g_{\bar kj}+iF_{\bar kj}\right)\left(\frac i2\right)^ndz^1\wedge d\bar z^1\wedge\cdots\wedge dz^n\wedge d\bar z^n\nonumber\\
&=&{\rm det}(g^{j\bar k}){\rm det}\left(g_{\bar kj}+iF_{\bar kj}\right)\frac{\o^n}{n!}\nonumber\\
&=&{\rm det}(I+iK)\frac{\o^n}{n!}.\nonumber
 \eea
 Thus we see the complex function $\zeta$ is equal to
\be
\label{zeta}
\zeta={\rm det}(I+iK),
\ee
and its modulus can be expressed as
 \be
 |\zeta|=\sqrt{\bar\zeta\zeta}=\sqrt{{\rm det}(I-iK){\rm det}(I+iK)}=\sqrt{{\rm det}(I+K^2)}.\nonumber
 \ee
The above formula implies $|\zeta|\geq 1$, since the matrix $K^2$ is positive definite. As a result we know that $\zeta/|\zeta|$ is a unit complex vector, whose argument is always defined and given by 
\be
\label{theta}
\t=-i{\rm log}\frac{{\rm det}(I+iK)}{\sqrt{{\rm det}(I+K^2)}}.
\ee
Again, we specify a branch cut for the logarithm by having $\t=0$ correspond to the case that $K=0$. In this way we can view $\t$ as a real function on $X$.

Now, at a point $p$ we can choose coordinates so $g_{\bar kj}=\d_{kj}$ and $F_{\bar kj}=\lambda_j\delta_{kj}$, and we refer to this choice of coordinates as {\it normal coordinates}. In normal coordinates the endomorphism $K$ is diagonal at the point $p$ with eigenvalues $\{\lambda_j\}$ for $1\leq j\leq n$, and the above formula for $\t$ becomes 
\bea
\label{arctan}
\t&=&-i{\rm log}\frac{\Pi_j(1+\lambda_j)}{(\Pi_j(1+\lambda_j)\Pi_i(1-\lambda_j))^{\frac12}}=-\frac i2{\rm log}\frac{\Pi_j(1+\lambda_j)}{\Pi_i(1-\lambda_j)}\nonumber\\
&=&\sum_j\left(\frac i2{\rm log}(1-\lambda_j)-\frac i2{\rm log}(1+\lambda_j)\right)=\sum_j{\rm arctan}(\lambda_j).
\eea
Once again we seek a metric $h$ on $L$ so that $\t\equiv\hat\t$. In this formulation it is clear that our equation is analogus to the special Lagrangian equation (for instance see the work of  Chen-Warren-Yuan \cite{CWY} and Wang-Yuan \cite{ WY1, WY2}).

In the special Lagrangian case certain bounds on the phase affect the behavior of the equation, and in particular $C^2$ bounds for the potential are easier to attain for large phase (see \cite{CWY,WY1,WY2} and references therein). This ends up being the case in our setting as well, and the following definitions will play an important role in the convergence results to follow.
 \begin{defn}
We say $\t$ satisfies the supercritical phase condition if 
 \be
|\t|> (n-2)\frac\pi2.\nonumber
 \ee
Furthermore, $\t$ satisfies the hypercritical phase condition if in addition 
\be
|\t|> (n-1)\frac\pi2.\nonumber
\ee
 \end{defn}
 We end this section with the following definition
 \begin{defn}
 The mean curvature 1 form of a metric $h$ on $L$ is defined by
 \be
 \label{mc}
 H:= d\t.
 \ee
 \end{defn}
Because a solution to \eqref{DHYMn} is given by a metric $h$ for which $\zeta$ has constant argument, it is clear such solutions have vanishing mean curvature.

\section{The volume functional and uniqueness}
\label{volume}

We begin this section by proving solutions to \eqref{DHYMn} minimize a volume type functional. The argument given here is analogous to the argument showing special Lagrangian submanifolds are calibrated submanifolds and thus minimal. We next show that all critical points of this functional solve \eqref{DHYMn}, and use this fact to prove Theorem \ref{uniqueness}. Recall the modulus of the complex function $\zeta$ is given by:
\be
|\zeta|={\sqrt{{\rm det}(I+K^2)}}.\nonumber
\ee
This leads to the following definition.
\begin{defn} The volume functional $V(h)$ of a metric is defined as follows:
\be
V(h):=\int_X|\zeta|\,\frac{\o^n}{n!}.\nonumber
\ee
\end{defn}
The above functional is clearly positive. We now show that any constant mean curvature metric is a global minimizer of $V(h)$.
\begin{prop}
\label{globalmin}
Let $h$ be a metric on $L$ with constant mean curvature. Then $h$ must be a global minimizer of $V(\cdot)$. Furthermore, at the minimizer $V(h)=|Z_L|$, which implies $V(\cdot)$ has a positive lower bound depending only on the Chern numbers of $L$. 
\end{prop}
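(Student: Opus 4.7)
The plan is to run the standard calibration-style argument, treating $\zeta\,\tfrac{\omega^n}{n!} = \tfrac{(\omega-F)^n}{n!}$ as playing the role of a ``calibrating form.'' The first step is to establish a universal lower bound on $V(\cdot)$ valid for every metric $h$. Since $\zeta$ is complex-valued, the triangle inequality for complex integrals gives
\be
V(h) \;=\; \int_X |\zeta|\,\frac{\omega^n}{n!} \;\ge\; \left|\int_X \zeta\,\frac{\omega^n}{n!}\right| \;=\; \left|\int_X \frac{(\omega - F)^n}{n!}\right| \;=\; |Z_L|,\nonumber
\ee
where the last equality is simply the definition of $Z_L$. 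Crucially, $|Z_L|$ is independent of the choice of metric $h$, as was already observed in Section \ref{intro}: the form $(\omega-F)^n/n!$ represents a fixed cohomology class determined by $[\omega]$ and $c_1(L)$, so its integral is a topological quantity. This already shows that $V(\cdot)$ is bounded below by a positive number determined by the Kähler class of $\omega$ and the Chern numbers of $L$.

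The second step is to check that constant mean curvature metrics actually saturate the inequality. If $H = d\theta \equiv 0$, then $\theta$ is a constant, call it $c$. Writing $\zeta = |\zeta|\,e^{i\theta}$, we then have $\zeta = |\zeta|\,e^{ic}$ pointwise, and hence
\be
Z_L \;=\; \int_X \zeta\,\frac{\omega^n}{n!} \;=\; e^{ic}\int_X |\zeta|\,\frac{\omega^n}{n!} \;=\; e^{ic}\,V(h).\nonumber
\ee
Because $|\zeta| = \sqrt{\det(I+K^2)} \ge 1$ established earlier, $V(h)$ is a positive real number, so taking modulus gives $V(h) = |Z_L|$, confirming both the equality case in the calibration inequality and (comparing arguments) that $c = \hat\theta$.

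Combining these two steps yields everything claimed: every constant mean curvature metric attains the universal lower bound, hence is a global minimizer, and the minimum value equals $|Z_L|$, which depends only on the topological data. There is no serious analytic obstacle here; the proof is essentially a one-line application of the triangle inequality combined with the cohomological invariance of $Z_L$. The only subtlety worth mentioning is the sign/branch choice for $\arg(\zeta)$, but since the paper has fixed the convention $\theta = 0$ at $F = 0$, the identification $c = \hat\theta$ is consistent.
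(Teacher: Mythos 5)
Your proof is correct, and it is the same calibration inequality as the paper's, just packaged more directly. The paper introduces the real top-form $\Psi = \mathrm{Re}\bigl(e^{-i\hat\theta}\tfrac{(\omega-F)^n}{n!}\bigr)$, writes $\Psi = \kappa\,|\zeta|\,\tfrac{\omega^n}{n!}$ with $\kappa = \mathrm{Re}\bigl(e^{-i\hat\theta}\zeta/|\zeta|\bigr) \le 1$, and compares $\int_X\Psi$ (metric-independent) to $V(\cdot)$; unwinding that chain reproduces exactly the complex triangle inequality $\bigl|\int_X\zeta\,\tfrac{\omega^n}{n!}\bigr| \le \int_X |\zeta|\,\tfrac{\omega^n}{n!}$ that you invoke in one step, with equality iff $\zeta$ has constant argument. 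What the paper's packaging buys is a reusable ``calibration'' ($\Psi$) and the pointwise obstruction ($\kappa \le 1$), both of which are deployed again almost verbatim in the proof of Theorem \ref{uniqueness}; your formulation is lighter here but would need to be re-expanded there. One minor advantage of your route: by writing $Z_L = e^{ic}V(h)$ with $V(h)>0$, you get $V(h)=|Z_L|$ directly from $\theta\equiv c$ without first having to argue $c=\hat\theta$, whereas the paper's proof asserts $\theta\equiv\hat\theta$ as an intermediate step (this identification then falls out of your computation as a byproduct, modulo the branch convention, exactly as you note).
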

\begin{proof}
First, we consider the real top dimensional form given by
\be
\Psi={\rm Re}\left(e^{-i\hat\t}\frac{(\o-F)^n}{n!}\right).\nonumber
\ee
Note that the integral of $\Psi$ is independent of metric. This observation is important, as $\Psi$ will play the analogue of a calibration form.  Because $\Psi$ is a real top dimensional form, it can be expressed as $\Psi=\kappa \,|\zeta|\,\frac{\o^n}{n!}$, where $\kappa$ is given by
\be
\label{kappa}
\kappa={\rm Re}\left(e^{-i\hat\t}\frac{\zeta}{|\zeta|}\right).
\ee
Since both $e^{-i\hat\t}$ and ${\zeta}/{|\zeta|}$ are unit length, it is clear that $\kappa\leq 1$. Now, suppose $h$ has constant mean curvature. Then by equation \eqref{mc} we see that $\t=\t(h)$ is locally constant (and therefore constant since we only consider $X$ with one connected component), which implies $\t\equiv\hat\t$. In this case we see that $\kappa$ is in fact equal to one, since
\be
\kappa={\rm Re}\left(e^{-i\hat\t}e^{i\t}\right)={\rm Re}\left(e^{-i\hat\t}e^{i\hat\t}\right)=1.\nonumber
\ee
Thus, for any other metric $h'$ we have
\be
V(h)=\int_X|\zeta(h)|\,\frac{\o^n}{n!}=\int_X\Psi(h)=\int_X\Psi(h')=\int_X\kappa\, |\zeta(h')|\,\frac{\o^n}{n!}\leq V(h'),\nonumber
\ee
where the third equality follows from the fact that the integral of $\Psi(h)$ is independent of metric, and the inequality follows since $\kappa\leq1$. This proves $h$ is a global minimizer for $V(\cdot)$. Again, because the integral of $\Psi(h)$ is independent of metric, the global minimum is given by
\bea
\int_X\Psi=\int_X{\rm Re}\left(e^{-i\hat\t}\frac{(\o-F)^n}{n!}\right)&=&{\rm Re}\left(e^{-i\hat\t}\int_X\frac{(\o-F)^n}{n!}\right)\nonumber\\
&=&{\rm Re}\left(e^{-i\hat\t}Z_L\right)=|Z_L|.\nonumber
\eea
This gives a positive lower bound for $V(\cdot)$. 
\end{proof}

Next we show that at any critical point of $V$, the metric $h$ must have constant mean curvature, which implies that all critical points of $V(h)$ are in fact global minimizers of $V(h)$. Before we can prove this, we need the following lemma.
\begin{lem}
The variation of $\t$ is given by
\be
\delta\t={\rm Tr}((I+K^2)^{-1}\delta K).\nonumber
\ee
\end{lem}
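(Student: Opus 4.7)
The plan is to compute $\delta\theta$ directly from the complex-analytic formula $\zeta=\det(I+iK)$ established in equation \eqref{zeta}. Since $|\zeta|\geq 1$ we have $\zeta\neq 0$, so $\theta=\arg\zeta$ may be written as the imaginary part of a well-defined logarithm, namely $\theta=\mathrm{Im}(\log\det(I+iK))=\mathrm{Im}(\mathrm{Tr}\log(I+iK))$ (with the branch cut fixed by $K=0\Rightarrow\theta=0$ as in the excerpt). Varying directly,
\be
\delta\theta=\mathrm{Im}\bigl(\mathrm{Tr}\bigl((I+iK)^{-1}\,i\,\delta K\bigr)\bigr)=\mathrm{Re}\bigl(\mathrm{Tr}\bigl((I+iK)^{-1}\delta K\bigr)\bigr).\nonumber
\ee

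Next I would split the resolvent. Since $K$ commutes with itself, $I+K^2=(I+iK)(I-iK)$, so $(I+iK)^{-1}=(I-iK)(I+K^2)^{-1}$. Substituting gives
\be
\delta\theta=\mathrm{Re}\,\mathrm{Tr}\bigl((I+K^2)^{-1}\delta K\bigr)+\mathrm{Im}\,\mathrm{Tr}\bigl(K(I+K^2)^{-1}\delta K\bigr).\nonumber
\ee
The lemma then reduces to showing that both traces are real, so that the first $\mathrm{Re}$ may be dropped and the second $\mathrm{Im}$ vanishes.

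This reality is where the underlying Hermitian structure enters. Because $F$ is a real $(1,1)$ form, the endomorphism $K^\ell_{\ j}=g^{\ell\bar k}F_{\bar kj}$ is self-adjoint with respect to $g$, i.e.\ $g_{\bar m\ell}K^\ell_{\ j}=F_{\bar mj}$ is Hermitian. A variation $h_t=e^{-\phi(t)}h_0$ produces $\delta F_{\bar kj}=\partial_j\partial_{\bar k}\phi$, again a real $(1,1)$ form, so $\delta K$ is also $g$-self-adjoint. Since $(I+K^2)^{-1}$ and $K(I+K^2)^{-1}$ are polynomials (resp.\ power series) in $K$, they too are $g$-self-adjoint, and the trace of a product of two $g$-self-adjoint endomorphisms is real. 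The identity $\delta\theta=\mathrm{Tr}((I+K^2)^{-1}\delta K)$ follows.

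The main obstacle is simply the bookkeeping around these reality statements. A clean alternative, which I would present as a sanity check, is to work at a point $p$ in normal coordinates where $g_{\bar kj}=\delta_{kj}$ and $K=\mathrm{diag}(\lambda_1,\dots,\lambda_n)$. By first-order eigenvalue perturbation for the Hermitian operator $K$, $\delta\lambda_j=(\delta K)^j_{\ j}$, and the formula \eqref{arctan} yields
\be
\delta\theta=\sum_j\frac{\delta\lambda_j}{1+\lambda_j^2}=\sum_j\frac{(\delta K)^j_{\ j}}{1+\lambda_j^2}=\mathrm{Tr}\bigl((I+K^2)^{-1}\delta K\bigr),\nonumber
\ee
which is the pointwise version of the claim; since both sides are coordinate-invariant this suffices.
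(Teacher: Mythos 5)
Your argument is correct, and it takes a somewhat different route from the paper's. The paper starts from the formula $\t=-i\log\bigl(\det(I+iK)/\sqrt{\det(I+K^2)}\bigr)$ and differentiates both numerator and denominator, producing two trace terms; after substituting the resolvent identity $(I+iK)^{-1}=(I+K^2)^{-1}-iK(I+K^2)^{-1}$, the two imaginary pieces cancel against each other by cyclicity of the trace. You instead write $\t=\mathrm{Im}\,\mathrm{Tr}\log(I+iK)$, so there is a single term to vary, and after the same resolvent split you must argue that the two traces are individually real; you do this correctly by observing that $K$ and $\delta K$ are $g$-self-adjoint (since $F$ and $\delta F=\pl\bar\pl\dot\phi$ are real $(1,1)$ forms), hence so are $(I+K^2)^{-1}$ and $K(I+K^2)^{-1}$, and the trace of a product of two self-adjoint operators is real. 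The trade-off is that the paper needs only cyclicity (and leaves the reality of the surviving term implicit, guaranteed a posteriori since $\delta\t$ is real), while your version has one fewer term to track but requires the self-adjointness bookkeeping explicitly. Your normal-coordinate sanity check via $\delta\lambda_j=(\delta K)^j{}_j$ and the $\arctan$ formula is also sound and is in the spirit of how the paper verifies its resolvent identity pointwise.
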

\begin{proof}
We begin by computing the variation directly using equation \eqref{theta}:
\be
\d\t=-i{\rm Tr}((I+iK)^{-1}i\delta K)+\frac i2{\rm Tr}((I+K^2)^{-1}(\delta K K+K\delta K)).\nonumber
\ee
Working in normal coordinates the endomorphism $K$ is diagonal at $p$, which we denote by $K=diag(\lambda_1,...,\lambda_n)$. One sees that the matrix $I+K^2$ is diagonal as well, from which it follows that $K$ and $(I+K^2)^{-1}$ commute. Furthermore, in these coordinates it is easy to see the identity
\be
\label{ident}
(I+iK)^{-1}=(I+K^2)^{-1}-iK(I+K^2)^{-1}.
\ee
Simply note that
\be
(I+iK)^{-1}=diag\left(\frac1{1+i\lambda_1},...,\frac1{1+i\lambda_n}\right)=diag\left(\frac{1-i\lambda_1}{1+\lambda_1^2},...,\frac{1-i\lambda_n}{1+\lambda_n^2}\right).\nonumber
\ee
Separating out the real and imaginary parts from each eigenvalue proves \eqref{ident}. Putting together everything so far, we see
\be
\d\t={\rm Tr}\left((I+K^2)^{-1}\delta K-iK(I+K^2)^{-1}\delta K+ i(I+K^2)^{-1}\delta K K\right).\nonumber
\ee
The imaginary terms cancel, completing the lemma. 
\end{proof}

We can now write down a local coordinate version of the mean curvature one form $H$. Using the metric $\eta_{\bar kj}$, the endomorphism $I+K^2$ can be expressed locally as 
\be
I+K^2=g^{p\bar q}\eta_{\bar q\ell} \frac{\pl}{\pl z^p}\otimes dz^\ell.\nonumber
\ee
The mean curvature one form $H$ can now be written as $H_j dz^j$, where $H_j$ is given by
\be
\label{mcloc}
H_j=\pl_j\t={\rm Tr}((I+K^2)^{-1}\nabla_jK)=\eta^{p\bar q}g_{\bar q\ell}\nabla_j(g^{\ell\bar m}F_{\bar mp})=\eta^{p\bar q}\nabla_jF_{\bar qp}.
\ee
The last equality follows because $\nabla$ passes through the metric $g_{\bar kj}$. Because the metric $\eta_{\bar kj}$ is analogous to the induced metric on a special Lagrangian submanifold, from the above equation we see that $\nabla F$ is the line bundle analogue of the second fundamental form.

The above lemma also lets us compute the time derivative of $\t$ along a path of metrics. Let $h(t)=e^{-\phi(t)}h_0$ be a smooth path of metrics, where $h_0$ is an initial metric and $\phi(t)$ is a real function. Then as before one computes
\be
\label{evolvet}
\dot \t={\rm Tr}((I+K^2)^{-1}\dot K)=\eta^{p\bar q}g_{\bar q\ell}\frac{d}{dt}(g^{\ell\bar m}F_{\bar mp})=\eta^{p\bar q}\pl_p\pl_{\bar q}\dot\phi=\Delta_{\eta}\dot\phi.
\ee
The above formula will be utilized many times in the analysis to follow. We are now ready to compute the variation of the volume functional $V(\cdot)$.

\begin{prop}
\label{criticalpoint}
Let $h(t)=e^{-\phi(t)}h_0$ be any smooth path of metrics on $L$. Then at a critical point $h$ of $V$ we have
\be
0=\frac d{dt}V(h)=-\int_X\langle H,d\dot\phi\rangle_\eta\,|\zeta|\,\frac{\o^n}{n!}.\nonumber
\ee
In particular at any critical point of $V$ the metric $h$ must have vanishing mean curvature. 
\end{prop}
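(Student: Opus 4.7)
The plan is to differentiate $V$ directly and integrate by parts, exploiting the pointwise identity $|\zeta|\o^n/n!=e^{-i\t}(\o-F)^n/n!$, which holds because $e^{-i\t}\zeta=|\zeta|$ and both sides are real. Writing $V(h)=\int_X e^{-i\t}(\o-F)^n/n!$ and differentiating under the integral gives
\[
\partial_t\!\left[e^{-i\t}(\o-F)^n/n!\right] \;=\; -i\dot\t\,|\zeta|\,\o^n/n! \;-\;\alpha\wedge\dot F,
\]
where $\alpha := e^{-i\t}(\o-F)^{n-1}/(n-1)!$. The first term is purely imaginary, so taking real parts (since $\dot V$ is real) yields $\dot V = -\Re\int_X\alpha\wedge\dot F$.

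Next I would substitute $\dot F=\tfrac12\partial\bar\partial\dot\phi$ and integrate by parts. The closedness of $\o-F$ gives $d\alpha=-iH\wedge\alpha$, hence $\partial\alpha=-i\,\partial\t\wedge\alpha$, so Stokes' theorem yields $\int_X\alpha\wedge\partial\bar\partial\dot\phi = i\int_X\partial\t\wedge\bar\partial\dot\phi\wedge\alpha$. I then apply the pointwise algebraic identity
\[
i\,\partial u\wedge\bar\partial v\wedge\frac{(\o-F)^{n-1}}{(n-1)!} \;=\; 2\,\tilde g^{j\bar k}\partial_j u\,\partial_{\bar k}v\,\frac{(\o-F)^n}{n!},
\]
where $\tilde g^{j\bar k}$ is the inverse of the complex matrix $g_{\bar kj}+iF_{\bar kj}$ (verified by simultaneously diagonalizing $g$ and $F$ in normal coordinates, the same procedure used for formulas \eqref{zeta}--\eqref{arctan}). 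This reduces everything to scalar contractions and gives $\dot V = -\Re\int_X\tilde g^{j\bar k}\partial_j\t\,\partial_{\bar k}\dot\phi\;|\zeta|\,\o^n/n!$.

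To identify this with the stated right-hand side, I would use the decomposition $(I+iK)^{-1}=(I+K^2)^{-1}-iK(I+K^2)^{-1}$ (equation \eqref{ident}) to write $\tilde g^{j\bar k} = \eta^{j\bar k} - iM^{j\bar k}$ with $M^{j\bar k}=\eta^{j\bar s}F_{\bar sm}g^{m\bar k}$ Hermitian. Taking real parts splits the integrand into $\Re(\eta^{j\bar k}\partial_j\t\,\partial_{\bar k}\dot\phi) = \langle H, d\dot\phi\rangle_\eta$ plus an extra piece $\Im(M^{j\bar k}\partial_j\t\,\partial_{\bar k}\dot\phi)$ whose integral against $|\zeta|\o^n/n!$ must be shown to vanish. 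This last vanishing is the step I expect to be the main obstacle; it follows from one additional application of Stokes using $d(\o-F)^{n-1}=0$ together with the reality of $\t$ and $\dot\phi$ (in the Riemann-surface case it reduces to the elementary identity $\int_X\partial_z f\,\partial_{\bar z}u = \int_X\partial_{\bar z}f\,\partial_z u$). Once the variation formula is in hand, the critical-point conclusion is immediate: taking $\dot\phi = \t - \hat\t$ makes $d\dot\phi = H$, so $\int_X|H|_\eta^2\,|\zeta|\,\o^n/n! = 0$, and since $|\zeta|\ge 1$ this forces $H\equiv 0$.
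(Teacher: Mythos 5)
Your approach is correct, and it is genuinely different from the paper's. The paper differentiates $v=|\zeta|$ directly in coordinates, applies the second Bianchi identity and the identity $g^{-1}=\eta^{-1}+\eta^{-1}Fg^{-1}Fg^{-1}$ to massage $\dot v$ into a divergence plus the mean-curvature term, and then integrates. You instead exploit the calibration-type identity $|\zeta|\,\o^n/n!=e^{-i\t}(\o-F)^n/n!$ (which is the same device the paper uses in Proposition~\ref{globalmin}), differentiate a closed form, and push derivatives around with Stokes. Your route avoids essentially all of the Bianchi-identity bookkeeping and is cleaner; the paper's route stays closer to the Lagrangian-submanifold computation it is imitating.

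The one place you flag as a possible obstacle --- showing that the $M^{j\bar k}$ piece integrates to zero --- does close up, and here is the cleanest way to see it. Integrate $\int_X\alpha\wedge\partial\bar\partial\dot\phi$ by parts in \emph{both} slots, using $d\alpha=-iH\wedge\alpha$: the $\partial$-integration gives $i\int_X\partial\t\wedge\bar\partial\dot\phi\wedge\alpha$ while the $\bar\partial$-integration gives $i\int_X\partial\dot\phi\wedge\bar\partial\t\wedge\alpha$. Converting both via your pointwise identity, this forces
\[
\int_X\tilde g^{j\bar k}\,\partial_j\t\,\partial_{\bar k}\dot\phi\;|\zeta|\,\frac{\o^n}{n!}\;=\;\int_X\tilde g^{j\bar k}\,\partial_j\dot\phi\,\partial_{\bar k}\t\;|\zeta|\,\frac{\o^n}{n!}.
\]
So you may replace $\partial_j\t\,\partial_{\bar k}\dot\phi$ by its Hermitian symmetrization $S_{\bar kj}=\tfrac12(\partial_j\t\,\partial_{\bar k}\dot\phi+\partial_j\dot\phi\,\partial_{\bar k}\t)$. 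Pairing the Hermitian matrices $\eta^{-1}$ and $M$ (in $\tilde g^{j\bar k}=\eta^{j\bar k}-iM^{j\bar k}$) against the Hermitian $S_{\bar kj}$ produces real traces, so the $-iM$ term is purely imaginary and drops out of $\Re\int_X\tilde g^{j\bar k}\partial_j\t\,\partial_{\bar k}\dot\phi\,|\zeta|\,\o^n/n!$. You are left with $-\dot V=\int_X\eta^{j\bar k}S_{\bar kj}\,|\zeta|\,\o^n/n!$, the real $\eta$-pairing of $H$ with $d\dot\phi$. One small caution: your one-line remark that in complex dimension one this reduces to $\int\partial_z f\,\partial_{\bar z}u=\int\partial_{\bar z}f\,\partial_z u$ is not quite accurate as stated, because the weight $e^{-i\t}$ (equivalently $v$) cannot be stripped off; the two Stokes integrations above are the correct weighted version of that identity. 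With this in place, setting $\dot\phi=\t-\hat\t$ at a critical point gives $\int_X\eta^{j\bar k}\partial_j\t\,\partial_{\bar k}\t\,|\zeta|\,\o^n/n!=0$, and positivity of $\eta$ together with $|\zeta|\ge1$ forces $H=d\t\equiv0$, as you say.
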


\begin{proof}
 For notational simplicity we denote the function $|\zeta|$ by $v$. To prove the proposition, we first compute the time derivative of $v$:
\be
\label{vold2}
\dot v=\frac{{\rm det}(I+K^2){\rm Tr}((I+K^2)^{-1}2K\dot K)}{2\sqrt {{\rm det}(I+K^2)}}={\rm Tr}((I+K^2)^{-1}K\dot K)\,v.
\ee
Writing the above equation in coordinates we have
\be
\dot v=\eta^{j\bar k}F_{\bar k\ell}g^{\ell\bar q}\nabla_j\nabla_{\bar q} \dot\phi\, v,\nonumber
\ee
which is equal to 
\be
\dot v=\nabla_j\left(\eta^{j\bar k}F_{\bar k\ell}g^{\ell\bar q}\nabla_{\bar q}\dot\phi\, v \right)-\nabla_j\left(\eta^{j\bar k}F_{\bar k\ell} \, v\right)g^{\ell\bar q}\nabla_{\bar q}\dot\phi.\nonumber
\ee
We now concentrate on the right most term. Applying the product rule gives
\bea
-\nabla_j\left(\eta^{j\bar k}F_{\bar k\ell} \, v\right)g^{\ell\bar q}\nabla_{\bar q}\dot\phi&=&-\nabla_j\left(\eta^{j\bar k} \,v\right)F_{\bar k\ell}g^{\ell\bar q}\nabla_{\bar q}\dot\phi-\eta^{j\bar k}\nabla_jF_{\bar k\ell} g^{\ell\bar q}\nabla_{\bar q}\dot\phi\, v\nonumber\\
&=&-\nabla_j\left(\eta^{j\bar k} \,v\right)F_{\bar k\ell}g^{\ell\bar q}\nabla_{\bar q}\dot\phi-H_\ell g^{\ell\bar q}\nabla_{\bar q}\dot\phi\, v,\nonumber
\eea
where we used equation \eqref{mcloc} on the second term on the right. In order to understand the first term on the right, we apply the product rule as follows:
\be
\label{step}
 \nabla_{j}(\eta^{j\bar k}\, v)=-\eta^{j\bar s}\nabla_j\eta_{\bar sr}\eta^{r\bar k}\, v+\eta^{j\bar k}\eta^{r\bar s} F_{\bar su}g^{u\bar q}\nabla_jF_{\bar qr}\, v.\nonumber
\ee
Expanding out the derivative of $\eta_{\bar sr}$, the above equation becomes
\be
-\eta^{j\bar s}\nabla_jF_{\bar su}g^{u\bar q}F_{\bar qr}\eta^{r\bar k}\, v-\eta^{j\bar s}F_{\bar su}g^{u\bar q}\nabla_jF_{\bar qr}\eta^{r\bar k}\, v+\eta^{j\bar k}\eta^{r\bar s} F_{\bar su}g^{u\bar q}\nabla_jF_{\bar qr}\, v.\nonumber
\ee
Note that in normal coordinates the metric $\eta_{\bar kj}$ is also diagonal, and given by $\eta_{\bar kj}=\d_{kj}(1+\lambda_j^2)$. We see the second two terms on the right cancel because they are equivalent to the sum
\be
\sum_{kp}\left( \eta^{k\bar k}F_{\bar kk}\nabla_kF_{\bar kp}\eta^{p\bar p}- \eta^{p\bar p}\eta^{k\bar k} F_{\bar kk}\nabla_pF_{\bar kk}\right)\, v,\nonumber
\ee
which vanishes by the second Bianchi identity. Putting everything together so far, we have
\be
\dot v=\nabla_j\left(\eta^{j\bar k}F_{\bar k\ell}g^{\ell\bar q}\nabla_{\bar q}\dot\phi\, v \right)-H_\ell g^{\ell\bar q}\nabla_{\bar q}\dot\phi\, v+\eta^{j\bar s}\nabla_jF_{\bar su}g^{u\bar q}F_{\bar qr}\eta^{r\bar k}F_{\bar k\ell}g^{\ell\bar q}\nabla_{\bar q}\dot\phi\, v.\nonumber
\ee
We again apply \eqref{mcloc} to see $\eta^{j\bar s}\nabla_jF_{\bar su}=H_u$. Now, making use of the identity 
\be
I=(I+K^2)^{-1}+(I+K^2)^{-1}K^2,\nonumber
\ee
one can prove
\be
\label{ginverse}
g^{\ell\bar q}=\eta^{\ell\bar q}+\eta^{\ell\bar k}F_{\bar kr}g^{r\bar s}F_{\bar sj}g^{j\bar q}.
\ee
This fact, along with the observation that $K$ commutes with $(I+K^2)^{-1}$, gives
\be
-H_\ell g^{\ell\bar q}\nabla_{\bar q}\dot\phi\, v+H_ug^{u\bar q}F_{\bar qr}\eta^{r\bar k}F_{\bar k\ell}g^{\ell\bar q}\nabla_{\bar q}\dot\phi\, v=-\eta^{\ell\bar q}H_\ell \nabla_{\bar q}\dot\phi\, v.\nonumber
\ee
Thus we can conclude
\be
\label{vevolve}
\dot v=-\eta^{\ell\bar q}H_\ell \nabla_{\bar q}\dot\phi\, v+\nabla_j\left(\eta^{j\bar k}F_{\bar k\ell}g^{\ell\bar q}\nabla_{\bar q}\dot\phi\, v \right).
\ee
Note the second term on the right is the divergence of a vector field, and therefore vanishes when integrating over $X$. This gives
\be
\int_X\dot v\,\frac{\o^n}{n!}=-\int_X\langle H,d\dot\phi\rangle_\eta\, v\,\frac{\o^n}{n!}.\nonumber
\ee
The term on the left is none other than $\frac{d}{dt} V(h)$. This completes the proof of the proposition.

\end{proof}

We can now prove our main uniqueness result.

\begin{proof}[Proof of Theorem \ref{uniqueness}]
Consider two metrics, $h$ and $h'$, and assume they both have constant mean curvature. Then by Proposition \ref{globalmin} we know both $h$ and $h'$ are global minimizers of $V$. Recall that at a global minimum the volume $V$ is specified, thus $V(h)=|Z_L|=V(h')$. Now, for any two metrics there exists a smooth function $\phi$ such that $h=e^{\phi}h'$, and we can define a path of metrics $h_t:=e^{\phi t}h'$, $t\in[0,1]$, satisfying $h_0=h'$ and $h_1=h$. Suppose that not every metric along the path is a global minimizer. Then there exists  a time $T\in[0,1]$ and a metric $h_T$ along the path where $V(h_T)$ achieves its maximum. At this point $\frac d{dt}V(h_T)=0$, which implies $h_T$ has constant mean curvature by Proposition \ref{criticalpoint}, and thus $V(h_T)=|Z_L|$ by Proposition \ref{globalmin}. We conclude that $V(h_t)$ must be constant along the path $h_t$.

Next we prove $h_t$ is a metric of constant mean curvature for each $t$.  Because $V(h_t)=|Z_L|$ for each time time $t$, we have
\be
\int_X|\zeta(h_t)|\,\frac{\omega^n}{n!}=\int_X\Psi=\int_X\kappa(h_t)\,|\zeta(h_t)|\,\frac{\o^n}{n!}.\nonumber
\ee
Furthermore, because $\kappa(h_t)\leq1$, to achieve the equality above we must have $\kappa(h_t)\equiv1$. Let $\t_t$ be the argument of $\zeta(h_t)$. Applying the definition of $\kappa$ \eqref{kappa}, we have
\be
1\equiv{\rm Re}\left(e^{-i\hat\t}\frac{\zeta}{|\zeta|}\right)={\rm Re}\left(e^{-i(\hat\t-\t_t)}\right)={\rm cos}\,(\hat\t-\t_t).\nonumber
\ee
This implies $\t_t$ is fixed and as a result $h_t$ has constant mean curvature.

We can now complete the proof of uniqueness. We have shown that $h_t$ has constant mean curvature for all $t$ along the path, which implies $\t_t\equiv \hat\t$ for all $t$. Equation \eqref{evolvet} gives
\be
0=\frac{d}{dt}\t(h_t)=\Delta_{\eta_t}\frac{d}{dt}(\phi t)=\Delta_{\eta_t}\phi.\nonumber
\ee
By the maximum principle we conclude that $\phi$ is constant.
\end{proof}

\section{Stability and existence on surfaces}
\label{stability}

In this section we define a notion of stability for line bundles in the case that $X$ is a K\"ahler surface. We then prove that a solution to \eqref{DHYMn} exists if and only if $L$ is stable. At the end of the section we extend our notion of stability to higher dimensions in the case of supercritical phase, and prove our stability is necessary.

For the remained of this section we assume, without loss of generality, that $\hat\t\geq0$.  If $\hat\t<0$, one can instead work on $L^{-1}$ to change the sign of $\hat\t$, since $Z_{L^{-1}}=\bar Z_L$. We begin by explicitly writing down the complex number $Z_L$, using the fact that $X$ has complex dimension two.
\bea
Z_L=\int_X \frac{(\o-F)^2}{2}&=&\int_X\frac{\o^2}2-\int_X\frac{(iF)^2}2+i\int_X(iF)\wedge\omega.\nonumber
\eea
For notational simplicity, we define the fixed constants
\be
\qquad a_1=\int_X(iF)\wedge\omega\qquad{\rm and}\qquad a_2=\int_X\frac{(iF)^2}2,\nonumber
\ee
which differ from the usual numbers defined by  the Chern character by factors of $2\pi$. We now express $Z_L$ as follows:
\be
Z_L=1-a_2+ia_1.\nonumber
\ee
This leads to the simple observation that
\be
\label{cot}
{\rm cot}(\hat\t)=\frac{1-a_2}{a_1}.
\ee

Note that ${\rm cot}(\hat\t)$ is undefined when $\hat\t=k \pi$ for $k\in \Z$. However, in the case that $\hat\t=0$, the line bundle $L$ has degree zero, and thus there exists a metric $h$ on $L$ so that $F\wedge\omega=0$. In this case it is clear that Im$(\zeta)\equiv0$, and as a result $h$ has constant mean curvature. Thus we can assume $\hat\t\neq 0$. Furthermore, by the arctan definition of $\t$, in normal coordinates we have
\be
\t={\rm arctan}(\lambda_1)+{\rm arctan}(\lambda_2).\nonumber
\ee
Since the image of ${\rm arctan}(\cdot)$ lies in the open interval $(-\frac\pi2,\frac\pi2)$, we see that $\t<\pi$, and as a result $\hat\t\in(0,\pi)$. Thus we can assume \eqref{cot} is well defined.

Consider the following closed, real $(1,1)$ form:
\be
\Omega:={\rm cot}(\hat\t)\omega+iF.\nonumber
\ee 
Note that the form $\Omega$ depends on a choice of metric $h$ on $L$, however the cohomology class $[\Omega]$ does not. 
\begin{defn}
We say the line bundle $L$ is stable if there exists a metric $h$ on $L$ such that $\Omega>0$.
\end{defn}
By the result of Demailly-Paun \cite{DP} (see also \cite{CT}), the above stability is equivalent to the following geometric condition.
\begin{defn}
We say the line bundle $L$ is stable if for all irreducible analytic cycles $Y\subset X$ of complex dimension one, the integral of $[\Omega]$ is positive:
\be
\int_Y\Omega>0.\nonumber
\ee
\end{defn}
With stability now defined, we can prove the main result of this section

\begin{proof}[Proof of Theorem \ref{surface}]
We first prove that the existence of a constant mean curvature metric implies $L$ is stable. Let $h$ be a solution to $\eqref{DHYMn}$, and let $F$ be the curvature of $h$. At an arbitrary point $p\in X$, choose coordinates so that the endomorphism $K$ is diagonal with eigenvalues $\lambda_1$ and $\lambda_2$. By equation \eqref{arctan}, we know
\be
{\rm arctan}(\lambda_1)+{\rm arctan}(\lambda_2)=\hat\t.
\ee
Again, because the image of ${\rm arctan}(\cdot)$ lies in the open interval $(-\frac\pi2,\frac\pi2)$, we know for $j=1,2$ the following inequality holds:
\be
{\rm arctan}(\lambda_j)\geq\hat\t-\frac{\pi}2.\nonumber
\ee
Now, we assumed that $\hat\t\in(0,\pi)$, which implies $\hat\t-\frac{\pi}2\in(-\frac\pi2,\frac\pi2)$. Because tan$(\cdot)$ is an increasing function on this interval, we can apply it to both sides of the above inequality to conclude
\be
\lambda_j\geq{\rm tan}(\hat\t-\frac{\pi}2)=-{\rm cot}(\hat\t).\nonumber
\ee
From the definition of $K$ it is clear the above inequality implies $\Omega>0$. 

We now prove stability implies existence of a solution to \eqref{DHYMn}. Let $h_0$ be a metric such that $\Omega_0:=\Omega(h_0)>0$, and let $F_0$ denote the curvature of $h_0$. We construct a solution to $\eqref{DHYMn}$ by finding a smooth real function $\phi$ on $X$ so that
\be
\label{firsteq}
(\Omega_0+\frac{i}{2}\pl\bar\pl\phi)^2=(1+{\rm cot}^2(\hat\t))\,\o^2.
\ee
To solve this equation, we first check that both sides define the same volume. To see this, we simply inegrate
\bea
\int_X\frac{\Omega_0^2}2&=&\int_X\frac{({\rm cot}(\hat\t)\o+iF_0)^2}2\nonumber\\
&=&\int_X{\rm cot}^2(\hat\t)\frac{\o^2}2+\int_X\frac{(iF_0)^2}2+{\rm cot}(\hat\t)\int_X(iF_0)\wedge\omega\nonumber\\
&=&{\rm cot}^2(\hat\t)+a_2+{\rm cot}(\hat\t)a_1=1+{\rm cot}^2(\hat\t),\nonumber
\eea
where the last equality follows from \eqref{cot}. This observation, along with the assumption that $\omega^2/2$ integrates to one, demonstrates that both sides of equation \eqref{firsteq} define the same volume.

Consider the following smooth function on $X$:
\be
G:={\rm log}\left(\frac{(1+{\rm cot}^2(\hat\t)){\rm det}(g_{\bar kj})}{{\rm det}({\rm cot}(\hat\t)g_{\bar kj}+F^0_{\bar kj})}\right)\nonumber.
\ee
We now rewrite equation \eqref{firsteq} as
\be
(\Omega_0+\frac{i}{2}\pl\bar\pl\phi)^2=e^G\Omega_0^2. \nonumber
\ee
By the second author's proof of the Calabi conjecture \cite{Y} we know there exists a smooth solution $\phi$ satisfying
\be
\Omega:=\Omega_0+\frac{i}2\pl\bar\pl \phi>0\qquad{\rm and}\qquad \sup_X\phi=0,\nonumber
\ee
such that $\phi$ solves \eqref{firsteq}. Given this solution $\phi$, we can define the metric $h=e^{-\phi}h_0$ on $L$. Then \eqref{firsteq} gives
\bea
(1+{\rm cot}^2(\hat\t))\,\o^2&=&({\rm cot}(\hat\t)\omega+iF_0+\frac{i}{2}\pl\bar\pl\phi)^2\nonumber\\
&=&({\rm cot}(\hat\t)\omega+iF)^2\nonumber\\
&=&{\rm cot}^2(\hat\t)\,\o^2+(iF)^2+2{\rm cot}(\hat\t)(iF)\wedge\o.\nonumber
\eea
Subtracting ${\rm cot}^2(\hat\t)\,\o^2$ from both sides, we see the above equation is equivalent to
\be
(iF)\wedge\o={\rm tan}(\hat\t)\left(\frac{\o^2}{2}-\frac{(iF)^2}{2}\right),\nonumber
\ee
which is none other than equation \eqref{DHYMn}. 

\end{proof}

Unfortunately, the above method does not easily generalize to higher dimensions. If $n\geq 3$, one can not rewrite equation \eqref{DHYMn} as a Monge-Amp\`ere equation, but rather a more complicated equation containing linear combinations of elementary symmetric polynomials of the eigenvalues of $K$. While these types of equations have been studied and solved in certain cases \cite{FLM,P1,P2} (see \cite{Kr3} for the real case), it is not known how to construct a solution in general, especially when some of the coefficients from the linear combination are negative. In the future, this may be a reasonable approach to constructing a solution to \eqref{DHYMn}. However, in the following section we propose a different approach that is more closely related to our geometric setup.

We conclude this section by extending our notion of stability to higher dimensions. Again, without loss of generality, assume $\hat\t\geq 0$. Furthermore, we assume $\hat\t$ satisfies the supercritical phase condition. Then, as before we can define the K\"ahler form
\be
\Omega:=-{\rm tan}(\hat\t-(n-1)\frac\pi2)\o+iF,\nonumber
\ee
and define $L$ to be stable if and only if there exists a metric $h$ on $L$ so that $\Omega>0$.

The proof that this condition is necessary follows the same argument as the one given in the proof of Theorem \ref{surface}. Assume we have a solution to the equation
\be
\hat\t=\sum_j{\rm arctan}(\lambda_j).\nonumber
\ee
Again, because the image of ${\rm arctan}(\cdot)$ lies in the open interval $(-\frac\pi2,\frac\pi2)$, we conclude:
\be
{\rm arctan}(\lambda_j)\geq\hat\t-(n-1)\frac{\pi}2\nonumber
\ee
for any $1\leq j\leq n$. By assumption  $\hat\t\in((n-2)\frac\pi2, \frac{n\pi}2)$, which implies $\hat\t-(n-1)\frac{\pi}2\in(-\frac\pi2,\frac\pi2)$. Because tan$(\cdot)$ is an increasing function on this interval, we can apply it to both sides of the above inequality to conclude the stability condition is necessary in this case. It would be interesting to know if this notion of stability is sufficient to guarantee existence, or if it needs to be strengthened.

 \section{The parabolic equation and convergence results}
 \label{convergence}

As stated in the previous section, using a Monge-Ampere type equation to solve \eqref{DHYMn} becomes very difficult in higher dimensions. We instead propose an alternate approach for solving our equation, which is to deform the metric $h$ by the gradient flow of the functional $V(\cdot)$. We call this flow the line bundle mean curvature flow.

The flow is defined as follows. Given initial metric $h_0$, consider a path of metrics $h(t)=e^{-\phi(t)}h_0$, starting at $\phi(0)=0$, defined by the evolution equation 
 \be
 \label{flow}
 \dot\phi=\t-\hat\t=-i{\rm log}\frac{{\rm det}(I+iK)}{\sqrt{{\rm det}(I+K^2)}}-\hat\t.
 \ee
 Taking another derivative in time, by equation \eqref{evolvet} we see right away that
 \be
 \label{thetaflow}
 \ddot\phi=\dot\t=\Delta_\eta\dot\phi.
 \ee
 Thus the flow is parabolic, and exists for a shot time $t\in[0,\epsilon_0)$. An application of equation \eqref{mcloc} gives that $d\phi$ evolves in the direction of the mean curvature one form:
 \be
 \label{evolve1}
d\dot\phi=d\t=H.
 \ee
Now, in the proof of Proposition \ref{criticalpoint}, we computed the variation of the volume functional $V(\cdot)$. Plugging in the above equation gives:
 \be
 \frac{d}{dt}V(h)=-\int_X\langle H,d\dot\phi\rangle_\eta |\zeta|\,\frac{\o^n}{n!}=-\int_X|H|^2_\eta\,|\zeta|\,\frac{\o^n}{n!}\leq 0.\nonumber
 \ee
 Therefore, \eqref{flow} is the gradient flow of the functional $V(h)$.

Although our flow is relatively easy to define, the nonlinearities associated with \eqref{flow} are quite formidable, and higher order estimates of the potential do not follow easily. In fact, in the general case, there is no way to rule out a finite time singularity. While this behavior may be of independent interest due to the relationship with Lagrangian mean curvature flow, it exposes some of the difficulties associated with constructing metrics of constant mean curvature.

 In the rest of this section, we prove convergence in two situations. First, we show that if $X$ has non-negative orthogonal bisectional curvature, and $\t$ satisfies the hypercritical phase condition, then the flow converges to a constant mean curvature metric along a subsequence of times. Second, we show that if $|\nabla F|^2$ is bounded along the flow, then again the flow converges along a subsequence of times. The term $\nabla F$ is the line bundle equivalent of the second fundamental form. Thus, just as in the case of the mean curvature flow, the second fundamental form $\nabla F$ must blow up at any finite time singularity of \eqref{flow}.

Let $L$ be a positive line bundle over $X$, and let $h_0$ be a metric on $L$ so that $iF_0>0$. Recall that the angle $\t(h_0)$ is given by
\be 
\t(h_0)=\sum_j{\rm arctan}(\lambda_j^0),\nonumber
\ee
where here $\lambda_j^0$ are the eigenvalues of $g^{\ell \bar q}F^0_{\bar qp}$. Thus, by possibly replacing $L$ with $L^{\otimes k}$ for a large $k$ and using the metric $h^k_0$, we can assume that the above angle is larger than $(n-1)\frac\pi2$ at all points on $X$. We drop the $k$ from our notation, and just work with an initial metric that satisfies this hypercritical phase condition. 

\begin{prop}
Suppose that $X$ has non-negative orthogonal bisectional curvature, and $L$ is a positive line bundle over $X$. If at time $t=0$ the hypercritical phase condition is satisfied, then the function $v(t):=|\zeta(t)|$ is uniformly bounded above for all time.
\end{prop}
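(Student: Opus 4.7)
The plan is to apply the parabolic maximum principle to $\log v$, after first verifying that the hypercritical phase condition persists along the flow. For the persistence, I would observe that differentiating $\dot\phi = \theta - \hat\theta$ in time and using equation (2.7) gives $\partial_t\theta = \Delta_\eta\dot\phi = \Delta_\eta\theta$, a linear parabolic equation. The maximum principle then yields $\min_X\theta(\cdot, t) \geq \min_X\theta(\cdot, 0) > (n-1)\tfrac{\pi}{2}$, and in turn each eigenvalue of $K$ satisfies $\arctan(\lambda_j) = \theta - \sum_{i\neq j}\arctan(\lambda_i) > 0$. Hence $iF > 0$ throughout the flow, which keeps $\Delta_\eta$ uniformly elliptic and the operator $h \mapsto \theta(h)$ concave.

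The main step is to compute $(\partial_t - \Delta_\eta)\log v$ in normal coordinates diagonalizing $K$ at a point. Writing $\log v = \tfrac{1}{2}(\log\det\eta - \log\det g)$ and using $\dot F_{\bar k j} = \partial_j\partial_{\bar k}\theta$, a direct computation gives
\[
\partial_t\log v \;=\; \sum_j\frac{\lambda_j}{1+\lambda_j^2}\,\partial_j\partial_{\bar j}\theta,
\]
while expanding $\Delta_\eta\log v = \eta^{p\bar q}\partial_p\partial_{\bar q}\log v$ via matrix calculus applied to $\tfrac{1}{2}\log\det(I + K^2)$ yields a comparable expression involving $\nabla\bar\nabla F$ plus a quadratic $|\nabla F|^2$-remainder. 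Using the Bianchi identity $\nabla_j F_{\bar k\ell} = \nabla_\ell F_{\bar k j}$ to symmetrize, the fourth-order part of the difference takes the form
\[
\tfrac{1}{2}\sum_{j,p}\frac{(\lambda_j - \lambda_p)\bigl[\nabla_{\bar j}\nabla_j F_{\bar p p} - \nabla_{\bar p}\nabla_p F_{\bar j j}\bigr]}{(1+\lambda_j^2)(1+\lambda_p^2)}.
\]
The bracket vanishes when $j = p$; for $j \neq p$, commuting the covariant derivatives via the K\"ahler commutation formulas converts the bracket into a bisectional curvature term of the form $R_{j\bar j p\bar p}$ times $\lambda$'s. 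Under the non-negative orthogonal bisectional curvature hypothesis together with the positivity $\lambda_j > 0$, this contribution has the favorable sign.

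The quadratic-in-$\nabla F$ remainder is controlled using the identity $\eta^{p\bar q} = g^{p\bar q} - \eta^{p\bar k}F_{\bar k r}g^{r\bar s}F_{\bar s j}g^{j\bar q}$ from equation (3.15) together with the positivity of the $\lambda_j$'s; it should assemble into a non-positive expression reflecting the concavity of $\arctan$. The result is $(\partial_t - \Delta_\eta)\log v \leq 0$, and the parabolic maximum principle gives $v(\cdot, t) \leq \sup_X v(\cdot, 0)$ uniformly in $t$. The hard part is the sign analysis of these quadratic terms: since $\eta$ is not K\"ahler, commutations of $\nabla_p$ and $\nabla_{\bar q}$ on the tensor $F_{\bar k j}$ produce torsion-like residues in addition to bisectional curvature, and one must choose the grouping of indices with care. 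Should an irreducible residue remain, the fallback is to work with $\log v + c\,\theta$ for a suitable constant $c$: since $(\partial_t - \Delta_\eta)\theta = 0$, this changes nothing in the parabolic inequality, and $\theta$ is a priori bounded by the preservation argument above, so it gives a free parameter to absorb the bad term.
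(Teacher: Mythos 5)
Your approach is essentially the same as the paper's: establish persistence of the hypercritical phase via the equation $(\partial_t - \Delta_\eta)\theta = 0$ and the maximum principle, then compute $(\partial_t - \Delta_\eta)\log v$ in normal coordinates and show it is $\leq 0$ using non-negative orthogonal bisectional curvature (for the commutator/curvature piece) together with $\lambda_j > 0$ (for the quadratic-in-$\nabla F$ piece). Your identification of the antisymmetrized fourth-order term and its conversion to $R_{\bar j \ell \bar\ell j}$ via commutation is the right structure, and matches what the paper obtains as $-\sum_{j<\ell}\frac{(\lambda_j-\lambda_\ell)^2 R_{\bar j\ell\bar\ell j}}{(1+\lambda_j^2)(1+\lambda_\ell^2)}$.

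Two points to clean up. First, your worry about ``torsion-like residues'' from $\eta$ being non-K\"ahler is misplaced at the level of the commutator $[\nabla_p,\nabla_{\bar q}]F_{\bar kj}$: throughout the paper, $\nabla$ is the Chern (hence Levi-Civita) connection of the K\"ahler metric $g$, not of $\eta$, so this commutator produces only the standard K\"ahler curvature of $g$ with no torsion. The non-K\"ahlerness of $\eta$ enters only when derivatives land on the coefficients $\eta^{j\bar k}$, and this is exactly what generates the $|\nabla F|^2$-type remainder; in normal coordinates the paper shows this remainder equals $-|\nabla F|^2_\eta - \sum_{\ell j r}\frac{\lambda_j\lambda_\ell|\nabla_j F_{\bar r\ell}|^2}{(1+\lambda_j^2)(1+\lambda_r^2)(1+\lambda_\ell^2)}$, which is non-positive precisely because all $\lambda_j > 0$. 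Second, your proposed fallback of replacing $\log v$ by $\log v + c\,\theta$ does not actually give you a free parameter: since $(\partial_t - \Delta_\eta)\theta = 0$ identically, adding $c\,\theta$ contributes nothing to the parabolic inequality and cannot absorb a bad term in $(\partial_t - \Delta_\eta)\log v$. Fortunately no such bad term survives once the Bianchi cancellations are carried out carefully, so the fallback is unnecessary.
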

\begin{proof}

We begin the proof by noting that the condition $\t>(n-1)\frac{\pi}2$ is preserved by the flow. To see this, observe that \eqref{thetaflow} implies
\be
\dot\t=\Delta_\eta\t.
\ee
By the maximum principle, $\inf_X\t(t)\geq \inf_X\t(0)>(n-1)\frac{\pi}2$, and thus the hypercritical phase condition is preserved. This condition is important because it implies the positivity of $iF$ along the flow. As before, let $\{\lambda_1,...,\lambda_n\}$ be the eigenvalues of the endomorphism $K^{\ell}{}_p:=g^{\ell\bar m}F_{\bar mp}$. Because the function arctan$(\cdot)$ takes values in the open interval $(-\frac\pi2,\frac\pi2)$, if $\t>(n-1)\frac{\pi}2$, then it is clear $\lambda_j>\epsilon_0$ for $1\leq j\leq n$, where $\epsilon_0$ is a fixed constant that only depends on $ \inf_X\t(h_0)$.

We now apply the heat operator to the function ${\rm log}(v)$. First, recall the computation of the time derivative of $v$, which we gave in the proof of Proposition \ref{criticalpoint}:
\be
\dot v=\eta^{j\bar k}F_{\bar kp}g^{p\bar q}\nabla_{\bar q}\nabla_j\dot\phi v=\eta^{j\bar k}F_{\bar kp}g^{p\bar q}\nabla_{\bar q}H_jv.\nonumber
\ee
As a result the time derivative of ${\rm log}(v)$ is given by:
\be
\frac{d}{dt}{\rm log}(v)=\eta^{j\bar k}F_{\bar kp}g^{p\bar q}\nabla_{\bar q} H_j=\eta^{j\bar k}F_{\bar kp}g^{p\bar q}\nabla_{\bar q} \left(\eta^{\ell\bar m}\nabla_{\ell}F_{\bar mj}\right).
\nonumber
\ee
Distributing the derivative inside the parenthesis gives:
\bea
\label{need this}
\frac{d}{dt}{\rm log}(v)&=&\eta^{j\bar k}F_{\bar kp}g^{p\bar q}\eta^{\ell\bar m}\nabla_{\bar q}\nabla_{\ell}F_{\bar mj}-\eta^{j\bar k}F_{\bar kp}g^{p\bar q}\eta^{\ell\bar s}F_{\bar sr}g^{r\bar b}\nabla_{\bar q}F_{\bar ba}\eta^{a\bar m}\nabla_{\ell}F_{\bar mj}\nonumber\\
&&-\eta^{j\bar k}F_{\bar kp}g^{p\bar q}\eta^{\ell\bar s}\nabla_{\bar q}F_{\bar sr}g^{r\bar b}F_{\bar ba}\eta^{a\bar m}\nabla_{\ell}F_{\bar mj}.
\eea
Next we compute the Laplacian of ${\rm log} (v)$. We begin by taking one space derivative:
\be
\nabla_{\bar m}{\rm log} (v)=\eta^{j\bar k}F_{\bar kp}g^{p\bar q}\nabla_{\bar m}F_{\bar qj}.\nonumber
\ee
Thus we see the Laplacian of ${\rm log} (v)$ is given by
\bea
\Delta_\eta{\rm log}(v)&=&\eta^{\ell\bar m}\nabla_\ell\left(\eta^{j\bar k}F_{\bar kp}g^{p\bar q}\nabla_{\bar m}F_{\bar qj}\right)\nonumber\\
&=&\eta^{\ell\bar m}\eta^{j\bar k}F_{\bar kp}g^{p\bar q}\nabla_\ell\nabla_{\bar m}F_{\bar qj}+\eta^{\ell\bar m}\eta^{j\bar k}\nabla_\ell F_{\bar kp}g^{p\bar q}\nabla_{\bar m}F_{\bar qj}\nonumber\\
&&-\eta^{\ell\bar m}\eta^{j\bar s}\nabla_\ell F_{\bar sr}g^{r\bar b}F_{\bar ba}\eta^{a\bar k}F_{\bar kp}g^{p\bar q}\nabla_{\bar m}F_{\bar qj}\nonumber\\
&&-\eta^{\ell\bar m}\eta^{j\bar s}F_{\bar sr}g^{r\bar b}\nabla_\ell F_{\bar ba}\eta^{a\bar k}F_{\bar kp}g^{p\bar q}\nabla_{\bar m}F_{\bar qj}.\nonumber
\eea
Applying equation \eqref{ginverse} gives the simplified expression
\bea
\Delta_\eta{\rm log}(v)&=&\eta^{\ell\bar m}\eta^{j\bar k}F_{\bar kp}g^{p\bar q}\nabla_\ell\nabla_{\bar q}F_{\bar mj}+\eta^{\ell\bar m}\eta^{j\bar k}\eta^{p\bar q}\nabla_\ell F_{\bar kp}\nabla_{\bar m}F_{\bar qj}\nonumber\\
&&-\eta^{\ell\bar m}\eta^{j\bar s}F_{\bar sr}g^{r\bar b}\nabla_\ell F_{\bar ba}\eta^{a\bar k}F_{\bar kp}g^{p\bar q}\nabla_{\bar m}F_{\bar qj}.\nonumber
\eea
Next we show that the final term on the right hand side above cancels with a term from the time derivate of ${\rm log} (v)$. To see this, first write this term in normal coordinates:
\be
\label{step}
-\eta^{j\bar j}\eta^{\ell\bar \ell}F_{\bar jj}\nabla_{\ell}F_{\bar jr}\eta^{r\bar r}F_{\bar rr}\nabla_{\bar \ell}F_{\bar rj}.
\ee
Again working in normal coordinates, we see that last term from equation \eqref{need this} is equal to
\be
-\eta^{j\bar j}\eta^{\ell\bar \ell}F_{\bar jj}\nabla_{\bar j}F_{\bar \ell r}\eta^{r\bar r}F_{\bar rr}\nabla_{ \ell}F_{\bar rj}.\nonumber
\ee
Switching $r$ and $j$ in the above expression gives
\be
-\eta^{j\bar j}\eta^{\ell\bar \ell}F_{\bar jj}\nabla_{\bar r}F_{\bar \ell j}\eta^{r\bar r}F_{\bar rr}\nabla_{ \ell}F_{\bar jr},\nonumber
\ee
and as a result this term is equal to \eqref{step} by the second Bianchi identity. Thus we have
\bea
\left(\frac{d}{dt}-\Delta_\eta\right){\rm log}(v)&=&\eta^{j\bar k}F_{\bar kp}g^{p\bar q}\eta^{\ell\bar m}[\nabla_{\bar q},\nabla_{\ell}]F_{\bar mj}-|\nabla F|^2_\eta\nonumber\\
&&-\eta^{j\bar k}F_{\bar kp}g^{p\bar q}\eta^{\ell\bar s}F_{\bar sr}g^{r\bar b}\nabla_{\bar q}F_{\bar ba}\eta^{a\bar m}\nabla_{\ell}F_{\bar mj}\nonumber
\eea
The second term on the right hand side above is clearly non-positive. In fact, the remaining two terms on the right hand side are non-positive as well. In normal coordinates, the last term above is given by
\be
-\eta^{j\bar j}F_{\bar jj}\eta^{\ell\bar\ell}F_{\bar \ell\ell}\nabla_{\bar j}F_{\bar \ell r}\eta^{r\bar r}\nabla_{\ell}F_{\bar rj}=-\sum_{\ell jr}\frac{\lambda_j\lambda_\ell|\nabla_jF_{\bar r\ell}|^2}{(1+\lambda_j^2)(1+\lambda_r)^2(1+\lambda_\ell^2)}.\nonumber
\ee
Because the eigenvalues $\lambda_\ell$ are all positive, the above expression is non-positive. We now concentrate on the commutator term:
\be
\eta^{j\bar k}F_{\bar kp}g^{p\bar q}\eta^{\ell\bar m}[\nabla_{\bar q},\nabla_{\ell}]F_{\bar mj}=\eta^{j\bar k}F_{\bar kp}g^{p\bar q}\eta^{\ell\bar m}R_{\bar q\ell}{}^r{}_jF_{\bar mr}-\eta^{j\bar k}F_{\bar kp}g^{p\bar q}\eta^{\ell\bar m}R_{\bar q\ell\bar m}{}^{\bar s}{}F_{\bar sj}.\nonumber
\ee
Again, working in normal coordinates we see
\be
\eta^{j\bar j}F_{\bar jj}\eta^{\ell\bar\ell}R_{\bar j\ell}{}^\ell{}_jF_{\bar\ell\ell}-\eta^{j\bar j}F_{\bar jj}\eta^{\ell\bar\ell}R_{\bar j\ell\bar \ell}{}^{\bar j}F_{\bar jj}=\sum_{j\ell}\left(\lambda_j\lambda_\ell-\lambda_j^2 \right)\frac{R_{\bar j\ell\bar\ell j}}{(1+\lambda_j^2)(1+\lambda_\ell^2)}.\nonumber
\ee
In the sum above, it is clear the terms with $j=\ell$ vanish. In each of the terms with $j\neq\ell$, we have $R_{\bar j\ell\bar\ell j}=R_{\bar \ell j\bar j\ell}$ since $g_{\bar kj}$ is K\"ahler. Thus we can write the above sum as
\bea
\sum_{j\ell}\frac{(\lambda_j\lambda_\ell-\lambda_j^2 )R_{\bar j\ell\bar\ell j}}{(1+\lambda_j^2)(1+\lambda_\ell^2)}&=&\sum_{j<\ell}\frac{(2\lambda_j\lambda_\ell-\lambda_j^2-\lambda_\ell^2 )R_{\bar j\ell\bar\ell j}}{(1+\lambda_j^2)(1+\lambda_\ell^2)}\nonumber\\
&=&-\sum_{j<\ell}\frac{(\lambda_j-\lambda_\ell)^2R_{\bar j\ell\bar\ell j}}{(1+\lambda_j^2)(1+\lambda_\ell^2)}\nonumber.
\eea
As a result, if $X$ has non-negative orthogonal bisectional curvature, the above expression is non-positive. Putting everything together, we have proven the following estimate
\be
\left(\frac{d}{dt}-\Delta_\eta\right){\rm log}(v)\leq 0.\nonumber
\ee
The upper bound for log$(v)$ now follows from the maximum principle. This completes the proof of the proposition.
\end{proof}

The above bound for $v$ gives a uniform upper bound for $|F|_g$, since
\be
\label{controlofFbyv}
v=|\zeta|=\sqrt{{\rm det}(1+K^2)}=\sqrt{\Pi_j(1+\lambda_j^2)}\geq\sqrt{\sum_j\lambda_j^2}=|F|_g.
\ee
Thus, for every eigenvalue of $K$ we have demonstrated the following bounds:
\be
\epsilon_0<\lambda_j<C{\qquad}1\leq j\leq n.\nonumber
\ee
 This brings us to the following important proposition.
\begin{prop}
\label{longtime}
Suppose that $h=e^{-\phi}h_0$ solves the line bundle mean curvature flow and that the curvature $F$ of $h$ is bounded above and below uniformly in time
\be
\label{C2bounds}
\epsilon_0\o\leq iF\leq C\o.
\ee
Then the flow \eqref{flow} exists for all time.
\end{prop}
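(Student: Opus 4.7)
The plan is a standard continuation argument. Assuming the maximal time of existence $T^* < \infty$, I will derive uniform $C^\infty$ estimates on $\phi$ over $[0,T^*)$ and then extend the flow past $T^*$, contradicting maximality. Under \eqref{C2bounds} the eigenvalues $\lambda_j$ of the endomorphism $K$ lie in a compact subset of $(0,\infty)$, so $\theta = \sum_j \arctan(\lambda_j)$ is uniformly bounded, and hence so is $\dot\phi = \theta - \hat\theta$; integrating in time yields $|\phi(t)| \leq M T^*$ on $[0,T^*)$, which is the desired $C^0$ bound. A second-order bound is already built in, since $\tfrac{i}{2}\partial\bar\partial\phi = i(F - F_0)$ has uniformly bounded eigenvalues by assumption.

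Next, the same eigenvalue control shows that $\eta_{\bar kj}$ is uniformly equivalent to $g_{\bar kj}$, so the linearized operator $\Delta_\eta$ is uniformly elliptic on $[0,T^*)$. The strict positivity $iF \geq \epsilon_0 \omega$ also implies that $\theta$, viewed as a function of the Hermitian matrix $F^0_{\bar kj} + \partial_j\partial_{\bar k}\phi$, is concave, since $\arctan$ is concave on $(0,\infty)$ and $\theta$ is a symmetric concave function of the positive eigenvalues of $K$ (as noted in the introduction). The evolution $\dot\phi = \theta(\partial\bar\partial\phi,x) - \hat\theta$ is therefore a uniformly parabolic, concave, fully nonlinear equation, and the parabolic Evans--Krylov theorem produces a uniform $C^{2,\alpha}$ estimate on $\phi$ depending only on $\epsilon_0$, $C$, $T^*$, and initial data. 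With $C^{2,\alpha}$ in hand the linearized equation has $C^\alpha$ coefficients, and a standard parabolic Schauder bootstrap yields uniform $C^k$ bounds for every $k$.

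Given these estimates, $\phi(t)$ converges in $C^\infty$ as $t \to (T^*)^-$ to a smooth limit $\phi(T^*)$; restarting the flow from $\phi(T^*)$ via the short-time existence already established for \eqref{flow} contradicts the maximality of $T^*$, so the flow exists for all $t \in [0,\infty)$. The principal obstacle is the $C^{2,\alpha}$ estimate, and specifically the role played by the \emph{lower} bound $iF \geq \epsilon_0 \omega$: without positivity of $iF$ the operator $\theta$ loses its concavity, Evans--Krylov is unavailable, and the regularity theory breaks down, so both inequalities in \eqref{C2bounds} are essential to this scheme.
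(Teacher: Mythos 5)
Your overall continuation scheme matches the paper's: establish $C^0$, $C^2$, $C^{2,\alpha}$ and then bootstrap, and restart the flow at the supposed maximal time. The $C^0$ and complex Hessian bounds, the uniform ellipticity of $\Delta_\eta$, and the concavity observation are all in line with what the paper does.

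The gap is in the step where you say ``the parabolic Evans--Krylov theorem produces a uniform $C^{2,\alpha}$ estimate.'' The hypothesis \eqref{C2bounds} only controls the \emph{complex} Hessian $\partial\bar\partial\phi$, not the full real Hessian $D^2\phi$, and the operator $Q(\partial\bar\partial\phi)=\theta$ depends only on the complex Hessian. The classical parabolic Evans--Krylov theorem is stated for operators that are uniformly parabolic and concave in the full real Hessian, and its hypotheses require $D^2\phi\in L^\infty$; neither of these is available here in the form the theorem needs. One cannot simply cite Evans--Krylov and move on. The paper explicitly points this out and supplies the missing ingredient: after first running Krylov--Safonov on the linearized equation $(\partial_t-\Delta_\eta)\theta=0$ to get a uniform $C^\alpha$ bound on the right-hand side $\theta$ (equivalently a $C^{1,\alpha/2}$ bound on $\phi$ in time), it invokes Wang's trick of embedding Hermitian matrices into real symmetric matrices and applying Caffarelli's improvement of Evans--Krylov, in the form worked out by Tosatti--Wang--Weinkove--Yang, to deduce $C^{2,\beta}$ from a bounded complex Hessian alone. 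Without some such complex-Hessian version of Evans--Krylov, the $C^{2,\alpha}$ estimate (and hence the Schauder bootstrap) does not follow, so this step in your proposal needs to be replaced by a reference to that machinery or an equivalent argument.
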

\begin{proof}
Suppose not, and let $T<\infty$ denote the maximal existence time of the flow \eqref{flow}. We will prove that $\phi(t)$ is bounded in $C^k$ for any $k$ independent of $T$, and thus it converges along a subsequence to a smooth potential $\phi(T)$. We can then continue the flow beginning with the smooth metric $h_T=e^{-\phi(T)}h_0$.

First, we note that 
\be
\frac{d}{dt}\int_X\phi\,\frac{\o^n}{n!}=\int_X(\t-\hat\t)\,\frac{\o^n}{n!}\leq C.\nonumber
\ee
As a result the average value of $\phi$ is bounded for any finite time $T$. Uniform control of $iF$ implies $\phi$ in bounded in $C^0$, and thus $\phi\in C^k$ for $0\leq k\leq 2$. We now prove higher order bounds for $\phi$.

Note that our assumption \eqref{C2bounds} implies that the Laplacian $\Delta_\eta=\eta^{p\bar q}\pl_p\pl_{\bar q}$ is uniformly elliptic in time and space. As we have see, because $\t$ solves
\be
\left(\frac d{dt}-\Delta_\eta\right)\t=0,\nonumber
\ee
the function $\t$ is bounded above by the maximum principle. Furthermore, by the linear parabolic estimates of Krylov-Safonov \cite{KS} (see also \cite{Kr}, Theorem 11, Section 4.2), uniform ellipticity of $\eta^{p\bar q}$ implies that there exists an $\al$ so that $\t$ is bounded in $C^\al$ in space and $C^{\frac\al2}$ in time. Because $\t=\dot\phi$, we in fact have that $\phi$ is uniformly bounded in $C^{1,\frac\al 2}$ in time.

Now, recall our definition of $\t$, given by equation \eqref{theta}. We rewrite this definition below in terms of a uniformly elliptic operator $Q$:
\be
\label{operatorQ}
Q(\pl\bar\pl \phi):=-i{\rm log}\frac{{\rm det}(I-iK)}{\sqrt{{\rm det}(I+K^2)}}=\t.
\ee
As we demonstrated in Section \ref{volume}, the operator $Q$ is readily seen to be elliptic by equation \eqref{evolvet}. In fact, assumption \eqref{C2bounds} implies $Q$ is concave as well. The proof of this follows a similar computation to one found in \cite{S2,SW}, which we include here for the reader's convenience. Consider the path of functions $\phi+\epsilon u$ for any smooth function $u\in C^\infty(X)$. Then by \eqref{evolvet} we have
\be
\frac{d}{d\epsilon}Q(\pl\bar\pl(\phi+\epsilon u))=\eta^{p\bar q}u_{\bar qp}.\nonumber
\ee
Taking a second derivative gives
\be
\left(\frac{d}{d\epsilon}\right)^2Q(\pl\bar\pl(\phi+\epsilon u))=-\eta^{p\bar m}u_{\bar mj}g^{j\bar k}F_{\bar k\ell}\eta^{\ell\bar q}u_{\bar qp}-\eta^{p\bar m}F_{\bar mj}g^{j\bar k}u_{\bar k\ell}\eta^{\ell\bar q} u_{\bar qp}.\nonumber
\ee
Writing the above expression in normal coordinates we see
\be
\left(\frac{d}{d\epsilon}\right)^2Q(\pl\bar\pl(\phi+\epsilon u))=-2\sum_{pj}\frac{\lambda_j\,|u_{\bar pj}|^2}{(1+\lambda_j^2)(1+\lambda_p^2)}.\nonumber
\ee
Thus as long as $\epsilon_0\o\leq iF$, the operator $Q$ is concave.

At this point we have shown $Q$ is a uniformly elliptic, concave operator. Because the right hand side of \eqref{operatorQ} is uniformly bounded in $C^\al$, we would like to apply the Evans-Krylov theorem \cite{E,Kr2} to show $\phi$ is in fact uniformly bounded in $C^{2,\b}$ for some $\b\in(0,1)$. However, we only have uniform bounds for the complex Hessian $\pl\bar\pl\phi$, and not the real Hessian $D^2\phi$, so we can not apply the Evans-Krylov theorem directly. To achieve the bounds, one can apply the trick of Wang \cite{WYu}, which was later extended to more general settings by Tosatti-Wang-Weinkove-Yang \cite{TWWY}. The key idea of Wang is to include the space of $n\times n$ Hermitian matrices into the subspace of $2n$ dimensional real symmetric matrices preserved by the complex structure, and then show $\phi$ satisfies a modified PDE (which is strictly uniformly elliptic in the real sense). Wang then proves his estimate by applying an improvement of the Evans-Krylov theorem due to Caffarelli \cite{C}. Because our operator $Q$ is a smooth function of the eigenvalues of   $K$, our setup fits well into the formalism defined in \cite{TWWY}, and thus we can apply their main result to conclude $C^{2,\beta}$ bounds for $\phi$. We direct the reader to \cite{TWWY} for details.

Choose a constant $\gamma\in(0,1)$ such that $\gamma<\,$min$\{\beta,\frac\al2\}$. So far we have shown $\phi$ is uniformly bounded in $C^{2,\gamma}$ in space and $C^{1,\gamma}$ in time. We can now prove higher order bounds. Define the function $\psi:=\pl_j\phi$ for any fixed $j$. Taking the time derivative of $\psi$ gives
\be
\dot\psi=H_j=\eta^{p\bar q}\nabla_pF_{\bar qj}=\eta^{p\bar q}\nabla_p\pl_{\bar q}\pl_j\phi+\eta^{p\bar q}\nabla_pF^0_{\bar qj}.\nonumber
\ee 
Thus we have
\be
\left(\frac d{dt}-\Delta_\eta\right)\psi=\eta^{p\bar q}\nabla_pF^0_{\bar qj}.\nonumber
\ee
The right hand side of the above equation is uniformly bounded in $C^{\gamma}$. Furthermore, the coefficients $\eta^{p\bar q}$ of $\Delta_\eta$ are controlled in $C^\gamma$ as well. Therefore, by standard Schauder estimates, $\psi\in C^{2,\gamma}$. Thus $\phi\in C^{3,\gamma}$, and the coefficients $\eta^{p\bar q}$ are controlled in $C^{1,\gamma}$. We can continue to bootstrap in this fashion to get all higher order bounds for $\phi$.
\end{proof}

We remark that this proposition relies heavily on the assumption that $iF$ remains a K\"ahler form. In general, when the line bundle $L$ is not positive, we do not expect $C^2$ bounds for $\phi$ to imply higher order estimates. We now prove Theorem \ref{bisectional}.
\begin{proof}[Proof of Theorem \ref{bisectional}]

Up to this point we have established long time existence of the flow \eqref{flow}, under the assumptions that $X$ has non-negative orthogonal bisectional curvature and $\t$ satisfies the hypercritical phase condition. Using long time existence, we construct a sequence of metrics which converge smoothly to a metric of constant mean curvature. 

First, we integrate the $L^2$ norm of $H$ in time. By Proposition \ref{criticalpoint} and the positivity $V(\cdot)$, we see this integral is bounded as $t$ approaches infinity:
\be
\int_0^\infty\int_X|H|^2_\eta \,v\,\frac{\o^n}{n!}\, dt = \lim_{a\ra\infty} \left(V(0)-V(a)\right)<\infty.\nonumber
\ee
As a result there exists a subsequence of times $t_j$ so that $||H(t_j)||_{L^2(\eta)}$ converges to zero as $j$ approaches infinity. Let $h_j=e^{-\phi(t_j)}h_0$ be the corresponding sequence of metrics. 

We next consider the following sequence of functions
\be
\psi_j=\phi(t_j)-\int_X\phi(t_j)\,\frac{\o^n}{n!},\nonumber
\ee
and define the sequence of metrics $\ti h_j=e^{-\psi_j}h_0$. Clearly both $\ti h_j$ and $h_j$ define the same curvatures, thus by \eqref{C2bounds} both the first and second derivatives of $\psi_j$ are bounded. Since $\psi_j$ is normalized to have zero average, this implies that $\psi_j$ is bounded in $C^0$ as well. Thus the $\psi_j$ are uniformly bounded in $C^2$. Furthermore, using the operator $Q$ from \eqref{operatorQ} we know
\be
Q(\pl\bar\pl\psi_j)=\t.\nonumber
\ee
Just as in the previous proposition, $\t$ is uniformly bounded in $C^\al$, thus we have uniform $C^{2,\al}$ bounds for $\psi_j$, which we can then bootstrap up to get higher order derivative bounds for $\psi_j$. 

Higher order bounds for $\psi_j$ imply there exists a subsequence (still denoted $\psi_j$) that converges to a limiting function $\psi_\infty$ in $C^\infty$, which defines a smooth limiting metric $h_\infty=e^{-\psi_\infty}h_0$ on $L$. Furthermore, because both $h_j$ and $\ti h_j$ define the same curvature $F_j$, our choice of subsequence implies the second fundamental forms $H(t_j)$ of $\ti h_j$ converge to zero in $L^2$. Thus $H(h_\infty)=0$, and as a result $h_\infty$ solves \eqref{DHYMn}. This completes the proof of the theorem.
 \end{proof}

 We conclude this section with a proof of Theorem \ref{higherbounds}, which shows that all higher order bounds for the curvature $F$ follow from a bound on the first derivative $\nabla F$. It will be useful extend the Laplacian $\Delta_\eta$ from $C^\infty(X)$ to the spaces $\Lambda^{p,q}(X)$.  Using the covariant derivative $\nabla$, we construct the following operators
\be
\Delta_\eta:=\eta^{p\bar q}\nabla_p\nabla_{\bar q}\qquad{\rm and}\qquad\bar\Delta_\eta:=\eta^{p\bar q}\nabla_{\bar q}\nabla_p.
\ee
We point out that these operators are not particularly well behaved, primarily for the reason that $\nabla$ is the covariant derivative with respect to $g_{\bar kj}$ as opposed to $\eta_{\bar kj}$. However, the covariant derivative $\nabla$ does appear in equation \eqref{mcloc}, and as a result the above operators show up quite naturally in our computations.

 \begin{proof}[Proof of Theorem \ref{higherbounds}]

Because the functional $V(h_t)$ is decreasing along the flow, the function $v$ is uniformly bounded in $L^1$, and thus $|F|_g$ is integrable by equation \eqref{controlofFbyv}. It follows that $\inf_X|F|_g$ is bounded uniformly in time, and we can apply assumption \eqref{assump1} to conclude that $\sup_X|F|_g$ is bounded as well.

As we have seen, the $C^0$ bound for $F$ implies the metrics $g_{\bar kj}$ and $\eta_{\bar kj}$ are equivalent, and in particular we know that $\eta^{j\bar k}$ does not degenerate along the flow. Therefore, we can use either metric to compute the norms of our evolving quantities. For this theorem it is convenient to use the evolving metric $\eta_{\bar kj}$, and we use this metric exclusively for the rest of the proof. To ease notation we simply denote $|\cdot|_\eta$ by $|\cdot|$.

We prove the higher order derivative bounds for $F$ by applying the maximum principle. To accomplish this, we need to compute the evolution equations for several key terms along the flow, begging with $F_{\bar kj}$. Equations \eqref{mcloc} and \eqref{evolve1} give:
\be
\dot F_{\bar kj}=\nabla_{\bar k}\nabla_j\dot\phi=\nabla_{\bar k}H_j=\nabla_{\bar k}(\eta^{p\bar q}\nabla_{j}F_{\bar qp}).\nonumber
\ee
 Distributing the derivative we get
 \be
\dot F_{\bar kj}=\eta^{p\bar q}\nabla_{\bar k}\nabla_jF_{\bar qp}+\nabla_{\bar k}(\eta^{p\bar q})\nabla_jF_{\bar qp}.\nonumber
 \ee
We apply the second Bianchi identity to the first term on the right, and commute derivatives as follows:
\bea
\eta^{p\bar q}\nabla_{\bar k}\nabla_pF_{\bar qj}&=&\eta^{p\bar q}\nabla_{\bar k}\nabla_{p}F_{\bar qj}\nonumber\\
&=&\eta^{p\bar q}[\nabla_{\bar k},\nabla_p]F_{\bar qj}+\eta^{p\bar q}\nabla_{p}\nabla_{\bar k}F_{\bar qj}\nonumber\\
&=&\eta^{p\bar q}R_{\bar kp}{}^\ell{}_jF_{\bar q\ell}-\eta^{p\bar q}R_{\bar kp}{}^{\bar m}{}_{\bar q}F_{\bar mj}+\Delta_\eta F_{\bar kj}.\nonumber
\eea
Next we compute out the derivate of $\eta^{j\bar k}$.
\be
\nabla_{\bar k}(\eta^{p\bar q})\nabla_jF_{\bar qp}=-\eta^{p\bar m}\nabla_{\bar k}F_{\bar mr}g^{r\bar s}F_{\bar s\ell}\eta^{\ell\bar q}\nabla_j F_{\bar qp}-\eta^{p\bar m}F_{\bar mr}g^{r\bar s}\nabla_{\bar k}F_{\bar s\ell}\eta^{\ell\bar q}\nabla_j F_{\bar qp}.\nonumber
\ee
Putting everything together we see
\bea
\label{evolveF}
\dot F_{\bar kj}&=&\Delta_\eta F_{\bar kj}+\eta^{p\bar q}R_{\bar kp}{}^\ell{}_jF_{\bar q\ell}-\eta^{p\bar q}R_{\bar kp}{}^{\bar m}{}_{\bar q}F_{\bar mj}-\eta^{p\bar m}\nabla_{\bar k}F_{\bar mr}g^{r\bar s}F_{\bar s\ell}\eta^{\ell\bar q}\nabla_j F_{\bar qp}\nonumber\\
&&-\eta^{p\bar m}F_{\bar mr}g^{r\bar s}\nabla_{\bar k}F_{\bar s\ell}\eta^{\ell\bar q}\nabla_j F_{\bar qp}.
\eea
This gives the evolution of $F$.

We now turn to the evolution of first derivative of $F$, so we take the derivative of the above expression. We will utilize the notation $A*B$ for any combination of the tensors $A$ and $B$ where the exact form is not necessary.
\bea
\frac{d}{dt}( \nabla F )&=&\nabla \Delta_\eta F+\nabla(\eta^{-1}*R*F)+\nabla(\eta^{-2}*F*\nabla F*\bar\nabla F)\nonumber.
\eea
Interchanging the order of the derivative $\nabla$ and the Laplacian $\Delta_\eta$ gives
\bea
\nabla_p \Delta_\eta F_{\bar kj}&=&\nabla_p\eta^{r\bar s}\nabla_r\nabla_{\bar s} F_{\bar kj}+\eta^{r\bar s}\nabla_r\nabla_p\nabla_{\bar s} F_{\bar kj}\nonumber\\
&=&\nabla_p\eta^{r\bar s}\nabla_r\nabla_{\bar s} F_{\bar kj}-\eta^{r\bar s}\nabla_r\left([\nabla_{\bar s},\nabla_{p}] F_{\bar kj}\right)+\Delta_\eta\nabla_p F_{\bar kj}.\nonumber
\eea
Thus we have
\bea
\label{deriv}
\frac{d}{dt}( \nabla F )&=&\Delta_\eta\nabla F+\eta^{-2}*F*\nabla F*\nabla\bar\nabla F+\eta^{-1}\nabla R*F+\eta^{-1}R*\nabla F\nonumber\\
&&+\nabla(\eta^{-1}*R*F)+\nabla(\eta^{-2}*F*\nabla F*\bar\nabla F).
\eea
We will come back to this above equation in order to arrive at the general form for higher order derivatives of $F$.

At this point we are able to compute bounds for the heat operator on $|\nabla F|^2$, beginning with the time derivative. Using the fact that $F$ is bounded in $C^0$, the above equation gives
\bea
\langle \nabla\dot F, \nabla F\rangle&\leq& \langle \Delta_\eta\nabla F, \nabla F\rangle+C\left(|\nabla\bar\nabla F||\nabla F|^2+|\nabla\nabla F||\nabla F|^2\right)\nonumber\\
&&+C\left(|\nabla F|+|\nabla F|^2+|\nabla F|^4\right).\nonumber
\eea
Furthermore, we can apply our main assumption \eqref{assump1} to conclude
\be
\label{thing1}
\langle \nabla\dot F, \nabla F\rangle \leq \langle \Delta_\eta\nabla F, \nabla F\rangle+C\left(|\nabla\bar\nabla F|+|\nabla\nabla F|\right)+C.
\ee
Here we note that because all of are norms are computed using the evolving metric $\eta_{\bar kj}$, taking the time derivative of $|\nabla F|^2$ gives
\be
\label{thing2}
\frac{d}{dt}|\nabla F|^2\leq2\langle \nabla\dot F, \nabla F\rangle+3|\dot\eta^{-1}||\nabla F|^2.
\ee
The second term on the right needs to be estimated. We compute out the time derivative of $\eta^{j\bar k}$
\bea
\dot\eta^{p\bar q}&=&-\eta^{p\bar m}\dot F_{\bar m\ell}g^{\ell \bar k}F_{\bar kj}\eta^{j\bar q}-\eta^{p\bar m} F_{\bar m\ell}g^{\ell \bar k}\dot F_{\bar kj}\eta^{j\bar q}.\nonumber
\eea
Again using our $C^0$ bound for $F$, we see that
\be
|\dot\eta^{-1}|\leq C |\dot F|\leq C|\nabla\bar\nabla F|+|\nabla F|^2+C.\nonumber
\ee
where the second inequality follows from applying equation \eqref{evolveF}. Combining the above inequality with \eqref{thing1} and \eqref{thing2} gives:
\bea
\frac{d}{dt}|\nabla F|^2&\leq& 2\langle \Delta_\eta\nabla F, \nabla F\rangle+C\left(|\nabla\bar\nabla F|+|\nabla\nabla F|\right)+C\nonumber\\
&\leq&\langle \Delta_\eta\nabla F, \nabla F\rangle+\langle \nabla F, \bar\Delta_\eta\nabla F\rangle+\frac18|\nabla\bar\nabla F|^2+\frac18|\nabla\nabla F|^2+C,\nonumber
\eea
where for the second inequality  we used Young's inequality, and the fact that interchanging $\Delta_\eta\nabla F$ with $\bar\Delta_\eta\nabla F$ introduces curvature terms on $X$, which are fixed and bounded.

We now apply the Laplacian $\Delta_\eta$ to $|\nabla F|^2$. Because the norm $|\cdot|$ is computed using the metric $\eta_{\bar kj}$, and our covariant derivative $\nabla$ is with respect to the metric $g_{\bar kj}$, we will get some extra terms when the derivatives land on the metric. However, these terms are easily controlled. Note that
\bea
\nabla_r\eta^{p\bar q}&=&-\eta^{p\bar m}\nabla_r F_{\bar m\ell}g^{\ell \bar k}F_{\bar kj}\eta^{j\bar q}-\eta^{p\bar m} F_{\bar m\ell}g^{\ell \bar k}\nabla_rF_{\bar kj}\eta^{j\bar q},\nonumber
\eea
so if at most one derivative lands on a metric term, it is bounded by $|\nabla F|$ and thus controlled in $C^0$. If two derivatives land on $\eta_{\bar kj}$, it is possible to create a second order derivative of $F$. Keeping this in mind we see
\bea
\Delta_\eta|\nabla F|^2&\geq& \langle \Delta_\eta\nabla F, \nabla F\rangle+\langle \nabla F, \bar\Delta_\eta\nabla F\rangle+|\nabla\nabla F|^2+|\bar\nabla\nabla F|^2\nonumber\\
&&-C|\nabla\nabla F|-C|\bar\nabla\nabla F|-C.\nonumber
\eea
Putting everything together, and applying Young's inequality to the second derivative terms above, we conclude 
\be
\label{dfbound}
\left(\frac d{dt}-\Delta_\eta\right)|\nabla F|^2\leq -\frac34\left(|\nabla\bar\nabla F|^2+|\nabla\nabla F|^2\right)+C.
\ee
This is the key inequality we will use when applying the heat operator to $|\nabla F|^2$.

We now turn to the second order derivatives of $F$. We began by taking the derivative of \eqref{deriv}, which gives the following equation:
\bea
\label{bigderiv}
\frac{d}{dt}(\nabla\nabla F )&=&\nabla\Delta_\eta\nabla F+\nabla(\eta^{-2}*F*\nabla F*\nabla\bar\nabla F+\eta^{-1}\nabla R*F)\nonumber\\
&&+\nabla(\eta^{-1}R*\nabla F)+\nabla\nabla(\eta^{-1}*R*F+\eta^{-2}*F*\nabla F*\bar\nabla F).
\eea
We want to interchange the order of the derivative and Laplacian in the first term above. As before we see
\bea
\nabla_p \Delta_\eta \nabla_{q} F_{\bar kj}&=&\nabla_p\eta^{r\bar s}\nabla_r\nabla_{\bar s} \nabla_{ q}F_{\bar kj}+\eta^{r\bar s}\nabla_r\nabla_p\nabla_{\bar s} \nabla_{ q}F_{\bar kj}\nonumber\\
&=&\nabla_p\eta^{r\bar s}\nabla_r\nabla_{\bar s} \nabla_{ q}F_{\bar kj}-\eta^{r\bar s}\nabla_r\left([\nabla_{\bar s},\nabla_{p}] \nabla_{ q}F_{\bar kj}\right)+\Delta_\eta\nabla_p\nabla_{q} F_{\bar kj}.\nonumber
\eea
Combining the previous two equations, and using the fact that $F$, and $\nabla F$ are all bounded in $C^0$, we arrive at the following bound:
\bea
\langle \nabla\nabla \dot F, \nabla\nabla F\rangle&\leq& \langle \Delta_\eta\nabla\bar\nabla F, \nabla\bar\nabla F\rangle+C(|\nabla\bar\nabla\nabla F||\nabla\nabla F|+|\nabla\nabla\nabla F||\nabla\nabla F|)\nonumber\\
&&+C(|\nabla\nabla F|^2+|\nabla\bar\nabla F||\nabla\nabla F|+|\nabla\bar\nabla F||\nabla\nabla F|^2).\nonumber
\eea
Again, when we take the time derivative of $|\nabla\nabla F|^2$, we get a contribution from the metric terms. However, as we have seen, $\dot\eta^{j\bar k}$ is controlled by $|\nabla\bar\nabla F|$, and thus these terms can be absorbed into $C|\nabla\bar\nabla F||\nabla\nabla F|^2$. Furthermore, when taking the Laplacian $\Delta_\eta$ of $|\nabla\nabla F|^2$, we get extra terms from when the derivatives hit $\eta^{j\bar k}$, but once again this terms can be absorbed into $C|\nabla\bar\nabla F||\nabla\nabla F|^2$, since at most two derivatives can hit $\eta^{j\bar k}$. Finally, we assume without loss of generality that $|\nabla\nabla F|^2$ is bigger than one. This leads to the following important inequality
\bea
\label{ddfbound}
\left(\frac d{dt}-\Delta_\eta\right)|\nabla\nabla F|^2&\leq& -\frac34|\nabla\nabla\nabla F|^2-\frac34|\bar\nabla\nabla\nabla F|^2\nonumber\\
&&+C|\nabla\bar\nabla F||\nabla\nabla F|^2.
\eea
Following the exact same method, we can also prove an identical bound for the mixed partial derivatives $|\nabla\bar\nabla F|^2$.

We are now ready to prove a $C^0$ bound for the second order derivatives of $F$. For the argument to follow, we find it much simpler to change our notation for derivatives, avoiding the cumbersome use of both barred and unbarred derivatives. Let $D=\nabla+\bar\nabla$ denote all possible first order derivatives, while $D^kF$ denotes all possible derivatives of $F$ to $k$-th order (both barred and unbarred). Then inequality \eqref{dfbound} can be rewritten as
\be
\label{dfbound2}
\left(\frac d{dt}-\Delta_\eta\right)|D F|^2\leq -\frac34|D^2F|^2+C.
\ee
Furthermore, both \eqref{ddfbound} and the identical computation for $|\nabla\bar\nabla F|^2$ can be combined to give
\bea
\label{ddfbound2}
\left(\frac d{dt}-\Delta_\eta\right)|D^2F|^2&\leq&-\frac34|D^3F|^2+C|D^2F|^3.
\eea
We argue as follows. Let $A\geq1$ denote the $C^0$ bound for $|DF|^2.$ For some large constant $C_0$ (to be determined), define the function $f=(C_0A+|DF|^2)|D^2F|^2$. The time derivative of $f$ is given by
\be
\dot f=\frac{d}{dt}|DF|^2|D^2F|^2+(C_0A+|DF|^2)\frac{d}{dt}|D^2F|^2.\nonumber
\ee
Furthermore, taking Laplacian $\Delta_\eta$ of $f$ gives:
\be
\Delta_\eta f\geq\Delta_\eta|DF|^2|D^2F|^2+(C_0A+|DF|^2)\Delta_\eta|D^2F|^2-8A|D^2F|^2|D^3F|.\nonumber
\ee
This allows us to conclude 
\bea
\left(\frac d{dt}-\Delta_\eta\right)f&\leq& \left(\frac d{dt}-\Delta_\eta\right)|DF|^2|D^2F|^2+\frac14|D^2F|^4+64A^2|D^3F|^2\nonumber\\
&&+(C_0A+|DF|^2)\left(\frac d{dt}-\Delta_\eta\right)|D^2F|^2\nonumber\\
&\leq&-\frac12|D^2F|^4+C|D^2F|^2+64A^2|D^3F|^2\nonumber\\
&&-\frac34(C_0A+|DF|^2)|D^3F|^2+C(C_0A+|DF|^2)|D^2F|^3.\nonumber
\eea
Choose $C_0$ large enough so that $\frac34C_0\geq 64A$. Now, we apply Young's inequality to the positive $|D^2F|^3$ term above:
\be
C(C_0A+|DF|^2)|D^2F|^3\leq \varepsilon_1^{\frac43} |D^2F|^4+\frac1{\varepsilon_1^4}C^4(C_0A+|DF|^2)^4.\nonumber
\ee
We can also estimate the $C|D^2F|^2$ using Young's inequality
\be
C|D^2F|^2\leq \varepsilon_2|D^2F|^4+\frac1{\varepsilon_2}C^2.\nonumber
\ee
Thus, for $\varepsilon_1$ and $\varepsilon_2$ small enough, there exists a constant $C$ so that
\be
\left(\frac d{dt}-\Delta_\eta\right)f\leq -\frac14|D^2F|^4+CA^4.\nonumber
\ee
Now, by making the constant $C$ larger if necessary, the above inequality can be written with $f^2$ on the right hand side
\be
\left(\frac d{dt}-\Delta_\eta\right)f \leq-\frac{f^2}{CA^4}+CA^4.\nonumber
\ee
For the same constant $C$, we define the following function:
\be
\hat f=\frac{f}{CA^4}-A.\nonumber
\ee
Now, if $\hat f\leq 0$ for all time, than we would know $|D^2F|$ is bounded uniformly above by a constant. If not, there exists a point $p\in X$ and a time $t\in[0,T)$ such that $\hat f(p,t)>\epsilon$ for some small $\epsilon>0$. Without loss of generality we can assume that the function $\hat f$ achieves a local max at $p$, and that $f$ is nondecreasing in time.
Then at $(p,t)$ we have
\be
0\leq\left(\frac{d}{dt}-\Delta_\eta \right)\hat f\leq -\frac{f^2}{(CA^4)^2}+1.\nonumber
\ee
Note by assumption $A\geq1$. Thus we have
\be
0\leq\left(\frac{d}{dt}-\Delta_\eta \right)\hat f\leq -(\hat f+A)^2+A^2\leq-\hat f^2\leq -\epsilon^2,\nonumber
\ee
a contradiction. Thus $\hat f\leq 0$ for all time, and as a result $|D^2F|^2\leq CA^5.$ This gives us control of all second order derivates of $F$.

We now address the higher order derivative estimates. Since we have just shown that $|D^2  F|$ is bounded in $C^0$, we can rewrite \eqref{ddfbound2} as
\be
\label{ddfboundw}
\left(\frac d{dt}-\Delta_\eta\right)|D^2 F|^2\leq -\frac12|D^3  F|^2+C_2|D^2  F|^2.\nonumber
\ee
Furthermore, if one assumes that $|D^{k-1}F|^2$ is uniformly bounded for $k\geq 3$, by taking more derivatives of \eqref{bigderiv} and applying similar bounds to before, it is easy to see that
\be
\label{derivk}
\left(\frac d{dt}-\Delta_\eta\right)|D^k F|^2\leq -\frac12|D^{k+1}F|^2+C_k|D^k F|^2,\nonumber
\ee
where all the constants $C_k$ are independent of time. The higher derivative bounds follow by induction, using the exact method we used to achieve the second order bound. Specifically, assume that for $k\geq 3$ we know $|D^{k-1} F|^2\leq A_k$ for a large constant $A_k$ independent of time. Define the function $f_k=(C_0A^2_k+|D^{k-1}F|^2)|D^{k}F|^2$, and apply the heat operator. Just as before one can prove $|D^kF|^2$ is uniformly bounded by a large constant $A_{k+1}$.

This completes the proof of \eqref{goal1}. To prove convergence, we can follow the same arguments as those given in Proposition \ref{longtime} and the proof of Theorem \ref{bisectional}. \end{proof}

 \newpage
 \thispagestyle{plain}

\end{normalsize}
\end{document}